\newtheorem{thm}{Theorem}[section]
\newtheorem*{thm*}{Theorem}
\newtheorem{lem}[thm]{Lemma}
\newtheorem{prop}[thm]{Proposition}
\newtheorem{cor}[thm]{Corollary}
\newtheorem*{conj*}{Conjecture}
\newtheorem*{assume*}{Assumption}
\theoremstyle{definition}
\newtheorem{defn}[thm]{Definition}
\theoremstyle{remark}
\newtheorem{ex}[thm]{Example}
\newtheorem{iden}{Identity}
\newcommand{\ol}{\overline}
\newcommand{\Z}{\mathbb Z}
\newcommand{\HopfAlg}{\mathit{HopfAlg}}
\newcommand{\CoAlg}{\mathit{CoAlg}}
\newcommand{\Alg}{\mathit{Alg}}
\newcommand{\EHopfAlg}[1]{\mathcal{E}_{#1}\mathit{HopfAlg}}
\newcommand{\SAlg}[1]{\mathcal{S}_{#1}\mathit{Alg}}
\newcommand{\EAlg}[1]{E_{#1}\mathit{Alg}}
\newcommand{\calEAlg}[1]{\mathcal E_{#1}\mathit{Alg}}
\DeclareMathOperator{\Hom}{Hom} 
\DeclareMathOperator{\Sq}{Sq}
\DeclareMathOperator{\coker}{coker}
\title{Brace Bar-Cobar Duality}
\author{Justin Young}
\address{École Polytechnique Fédérale de Lausanne\\
 SB MATHGEOM GR-HE\\ 
MA B3 465 (Bâtiment MA) \\
Station 8 \\
CH-1015 Lausanne}
\email{justin.young@epfl.ch}
\begin{document}

\maketitle
\begin{abstract}Using Kadeishvili's \cite{tKad} formulas with appropriate signs, we show that the classical cobar construction from coalgebras to algebras $\Omega: \CoAlg \to \Alg$ can be enhanced to a functor from Hopf algebras to $E_2$ algebras (for a certain choice of $E_2$ operad) $\Omega : \HopfAlg \to \EAlg{2}$, which, unlike its classical counterpart, is not strictly adjoint, but homotopically equivalent to the left adjoint of the enhanced bar construction $B: \EAlg{2} \to \HopfAlg$ studied by Gerstenhaber--Voronov and Fresse.
\end{abstract}

\tableofcontents
\setcounter{tocdepth}{2}

\section*{Introduction}
The purpose of this article is to prove a precise homotopical relationship between the category of $\mathcal S_2$ algebras and the category of Hopf algebras. By $\mathcal S_2$ we mean the $E_2$ suboperad of the $E_\infty$ operad of McClure--Smith \cite{McSmith2}, called the sequence operad, or the surjection operad. Classical results from \cite{HMS} (Theorems 3.6, 4.4, and 4.5) show that the cobar and bar constructions define an adjunction $\Omega: \CoAlg \rightleftarrows \Alg : B$ between coalgebras and algebras such that the unit and counit maps are homology equivalences. Gerstenhaber--Voronov \cite{GV} (Section 3.2), showed that the bar construction induces a functor $B: \SAlg{2} \to \HopfAlg$. This has been generalized by Fresse \cite{fressehopfbar} (Theorem 5.D, see also \cite{fresse} for specific formulas mod $2$) who showed that the bar construction induces functors $B: \SAlg{n} \to \EHopfAlg{n-1}$, where $\mathcal E_k$ is the Barratt--Eccles $E_k$ operad studied by Berger--Fresse \cite{bergfress}, which has the property that $\mathcal E_1$ is the associative operad (thus the usual bar construction is a special case), and there are equivalences $\mathcal E_k \to \mathcal S_k$ compatible with the respective inclusions $\mathcal E_k \to \mathcal E_{k+1}$ and $\mathcal S_k \to \mathcal S_{k+1}$. In the other direction, Kadeishvili \cite{tKad} showed that the cobar construction induces a functor $\Omega: \HopfAlg \to \SAlg{2}$. This has also been generalized by Fresse in unpublished work showing that the cobar construction induces a functor $\Omega: \EHopfAlg{n-1} \to  \calEAlg{n}$, which for $n=2$ reduces via the map $\mathcal E_2 \to \mathcal S_2$ to Kadeishvili's functor.

From all of this a natural question arises: to what extent can one recover the $E_n$ algebra $A$ from $BA$?

\begin{conj*}[Fresse] $\Omega B A$ and $A$ are equivalent as $\mathcal E_n$ algebras.
\end{conj*}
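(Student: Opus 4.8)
The plan is to deduce the conjecture, at least for $n=2$, directly from the homotopy adjunction $\Omega \dashv B$ that is the main result of this paper. Since $\Omega : \HopfAlg \to \calEAlg{2}$ is exhibited as homotopically equivalent to the genuine left adjoint of the enhanced bar functor $B : \calEAlg{2} \to \HopfAlg$, this adjunction supplies a natural homotopy counit $\varepsilon_A : \Omega B A \to A$ that is a morphism of $\calEAlg{2}$ algebras up to a coherent system of homotopies. It then suffices to show that $\varepsilon_A$ is an equivalence, and since equivalences of $\calEAlg{2}$ algebras are detected on underlying chain complexes, this reduces to showing that $\varepsilon_A$ is a quasi-isomorphism.

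For the underlying statement I would appeal to the classical theory. The enhanced functors are built over their classical counterparts: forgetting the $\calEAlg{2}$ structure of $\Omega B A$ recovers the classical cobar construction of the underlying coalgebra $T^c(sA)$ of the Hopf algebra $B A$, so the underlying chain complex of $\Omega B A$ is the classical $\Omega B A$, and $\varepsilon_A$ lies over the Husemoller--Moore--Stasheff counit of the classical adjunction $\Omega : \CoAlg \rightleftarrows \Alg : B$. That counit is a homology equivalence by the cited Theorems 3.6, 4.4 and 4.5 of \cite{HMS}, so $\varepsilon_A$ is a quasi-isomorphism, and hence an equivalence of $\calEAlg{2}$ algebras. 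This settles the conjecture in the case $n=2$.

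The hard part, and the reason the statement remains conjectural in general, is the passage to arbitrary $n$. For $n > 2$ the explicit brace operations of Kadeishvili that encode the $\calEAlg{2}$ structure on the cobar are no longer available, and one must instead work with the functor $\Omega : \EHopfAlg{n-1} \to \calEAlg{n}$ of Fresse's unpublished work. The crux is to establish the analogue of our homotopy adjunction at this higher level, namely that this $\Omega$ is homotopically left adjoint to $B : \calEAlg{n} \to \EHopfAlg{n-1}$; once that is in hand, the reduction to the classical quasi-isomorphism is formal exactly as above. I expect the principal obstacle to be controlling the compatibility of the cobar twisting differential with the higher $\mathcal E_n$ operations of the Barratt--Eccles operad up to coherent homotopy --- in particular pinning down the correct signs, which already required care in the $n=2$ case following Kadeishvili --- rather than the underlying homological comparison, which is classical.
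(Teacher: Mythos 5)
The key object in your argument --- a natural homotopy counit $\varepsilon_A\colon \Omega B A \to A$ that is a morphism of $\mathcal S_2$ (equivalently $\mathcal E_2$) algebras up to coherent homotopy and whose underlying chain map is the classical Husemoller--Moore--Stasheff counit --- is precisely what does not come for free, and its absence is the entire difficulty of the problem. The paper shows at the end of Section \ref{COBAR} that the natural algebra map $g\colon \Omega B A \to A$ is \emph{not} a map of $\mathcal S_2$ algebras: the higher braces vanish on generators of $\Omega B A$, e.g.\ $[[x]]\{[[y]],[[z]]\} = 0$, while $g([[x]])\{g([[y]]),g([[z]])\} = x\{y,z\}$ need not vanish in $A$. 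So there is no strict counit lying over the classical one, and a counit ``up to a coherent system of homotopies'' is not something a bare homotopy adjunction hands you: making that precise would require a theory of homotopy morphisms of $\mathcal S_2$ algebras together with a rectification statement, since the notion of equivalence in the conjecture (and in the paper) is a zig-zag of strict $\mathcal S_2$ quasi-isomorphisms. Neither your proposal nor the paper develops such a theory, so both steps you treat as formal --- the existence of $\varepsilon_A$ over the classical counit, and the application of ``equivalences are detected on underlying complexes'' to it --- are gaps.

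What the paper does instead is avoid the direct map entirely. It constructs the strict left adjoint $\widetilde\Omega\colon \HopfAlg \to \SAlg{2}$ of $B$, and its hard technical result, Theorem \ref{barthm}, is that the natural $\mathcal S_2$ map $\widetilde\Omega C \to \Omega C$ is a strong deformation retraction, proved by an explicit inductive homotopy built from variants of the McClure--Smith operators. The conjecture for $n=2$ is then realized as the zig-zag of strict $\mathcal S_2$ weak equivalences $\Omega B A \longleftarrow \widetilde\Omega B A \longrightarrow A$ (Corollaries \ref{adjcor} and \ref{equivthm}): the left map is an equivalence by Theorem \ref{barthm}, and the right map (the strict counit of $\widetilde\Omega \dashv B$) is an equivalence by two-out-of-three using exactly the classical \cite{HMS} input you cite, because the composite $\Omega B A \to \widetilde\Omega B A \to A$ corresponds to the universal twisting morphism $BA \to A$ and hence is the classical counit. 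So your appeal to \cite{HMS} lands in the right place, but the bridge you describe as formal --- from ``$\Omega$ is homotopically the left adjoint'' to ``there is an $E_2$ counit over the classical one'' --- is not formal; it is essentially the content of Theorem \ref{barthm}, and as written your proposal assumes what must be proved, even in the $n=2$ case.
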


The conjecture is a classical theorem for $n=1$, and remains open for $n\ge 3$. The main result of this paper is a proof of this conjecture in the case $n=2$. In this case the $\mathcal E_2$ structure reduces via the map $\mathcal E_2 \to \mathcal S_2$ to an $\mathcal S_2$ structure and so the statement is as follows. (This theorem appears in the main text as Corollary \ref{equivthm}.)

\begin{thm*}\label{introthm} If $A$ is an $\mathcal S_2$ algebra, then $\Omega BA$ and $A$ are equivialent as $\mathcal S_2$ algebras.
\end{thm*}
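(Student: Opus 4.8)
The plan is to deduce this statement from the paper's main duality theorem together with the classical bar--cobar adjunction of \cite{HMS}. Recall that the bar construction $B : \SAlg{2} \to \HopfAlg$ admits a genuine left adjoint $L$, and the content of the main theorem is that Kadeishvili's cobar functor $\Omega : \HopfAlg \to \SAlg{2}$, although not itself strictly adjoint to $B$, is naturally equivalent to $L$ through maps of $\mathcal{S}_2$ algebras. Granting this, it suffices to exhibit a natural equivalence $L B A \simeq A$ of $\mathcal{S}_2$ algebras, for then composing gives $\Omega B A \simeq L B A \simeq A$.

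First I would analyze the counit $\epsilon_A : L B A \to A$ of the adjunction $L \dashv B$. Because $L$ refines the classical cobar construction along the forgetful functor $\SAlg{2} \to \Alg$, applying that forgetful functor to $\epsilon_A$ recovers the counit of the classical adjunction $\Omega : \CoAlg \rightleftarrows \Alg : B$, which by \cite{HMS} is a homology isomorphism under the standard connectivity hypotheses inherited from the classical setting. Since $\mathcal{S}_2$ is $\Sigma$-cofibrant, the forgetful functor to chain complexes detects weak equivalences of $\mathcal{S}_2$ algebras; hence $\epsilon_A$, being an underlying homology isomorphism, is already an equivalence of $\mathcal{S}_2$ algebras. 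This yields $L B A \simeq A$, and combining with the identification $\Omega B A \simeq L B A$ supplied by the main theorem completes the argument.

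The main obstacle is the duality theorem itself, i.e. the natural equivalence $\Omega \simeq L$, which is where essentially all of the work resides. The difficulty is to reconcile two descriptions of the $\mathcal{S}_2$ structure on the cobar of a Hopf algebra: the \emph{explicit} one, given by Kadeishvili's formulas with the corrected signs and assembled into an honest action of the surjection operad, and the \emph{implicit} one carried by $L C$ through the universal property of $B$. Producing a natural comparison between these two structures, and verifying that it respects the brace operations up to the requisite homotopies, is delicate; the signs are the crux, since they are precisely what promotes the associative cobar differential to a genuine $\mathcal{S}_2$ action rather than a mere $A_\infty$ structure.
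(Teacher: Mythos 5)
Your overall architecture agrees with the paper's: introduce the strict left adjoint $L$ (the paper's $\widetilde\Omega$), compare it with Kadeishvili's $\Omega$, show the counit of $L \dashv B$ is an equivalence, and conclude via the zig-zag $\Omega BA \simeq LBA \to A$. However, your justification that the counit $\epsilon_A \colon LBA \to A$ is a weak equivalence contains a genuine error. You claim that applying the forgetful functor $\SAlg{2} \to \Alg$ to $\epsilon_A$ recovers the classical counit of $\Omega \colon \CoAlg \rightleftarrows \Alg \colon B$. This is false, because the underlying algebra of $LC$ is \emph{not} the classical cobar construction $T(\Sigma^{-1}\ol C)$: by construction $LC = \widetilde\Omega C$ is the free $\mathcal S_2$ algebra $\mathbb S_2(\Sigma^{-1}\ol C)$ modulo the ideal of Hopf-twisting relations, a much larger object in which, e.g., higher braces $[c_1]\{[c_2],[c_3]\}$ of generators are nonzero, whereas they vanish in $\Omega C$. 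The commutativity of the square of right adjoints (the Gerstenhaber--Voronov compatibility of $B$ with the forgetful functors) implies that the left adjoints commute with the \emph{free} functors, not with the forgetful ones; indeed, if the underlying algebra of $LC$ were $\Omega C$, then $\Omega$ with its $\mathcal S_2$ structure would itself be the strict left adjoint, which the paper explicitly shows is not the case.

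The correct argument --- and the one the paper gives --- runs through the comparison maps themselves. The universal properties produce a section $\Omega BA \to \widetilde\Omega BA$ (a map of algebras) and a retraction $\widetilde\Omega BA \to \Omega BA$ (a map of $\mathcal S_2$ algebras), and the composite of the section with $\epsilon_A$ is the classical counit $\Omega BA \to A$, a homology isomorphism by the cited results of Husemoller--Moore--Stasheff. The hard technical theorem of the paper is that the retraction $\widetilde\Omega C \to \Omega C$ is a (strong deformation) retraction, hence a weak equivalence; by two-out-of-three the section is one too, and then $\epsilon_A$ is an equivalence, giving the zig-zag $\Omega BA \leftarrow \widetilde\Omega BA \to A$ of $\mathcal S_2$ maps. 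So the equivalence of the counit is not independent of the duality theorem, as your proposal structures it; it is a consequence of that theorem together with the classical result. A further minor mischaracterization: constructing the natural comparison $\widetilde\Omega C \to \Omega C$ is not delicate at all --- it is a strict map of $\mathcal S_2$ algebras coming immediately from the fact that $C \to \Omega C$ is a Hopf twisting morphism --- the real work (the paper's inductive construction of a derivation homotopy via the operators $h_{(j,S)}$ on the surjection operad) is in proving that this map is a deformation retraction.
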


Instead of proving the theorem directly, we study the strict left adjoint of $B$, denoted $\widetilde \Omega: \HopfAlg \to \SAlg{2}$, and prove that there is a natural equivalence of functors $\widetilde \Omega \to \Omega$. As we will show in Section \ref{tildecobar}, the theorem then follows from classical results. The theorem above of theoretical interest, as there does not exist a model category structure on $\HopfAlg$, but according to Mandell \cite{mandellenfpadic}(Section 13, see also Fresse \cite{fressebook} (Part III Section 12)) $\mathcal S_2$ algebras do possess a \emph{semi}-model structure. Thus, in the language of Chachólski--Scherer \cite{JeromeW} (Chapter I Section 5), we have a right semi-model approximation (a generalization of a Quillen equivalence) \[\widetilde \Omega \colon \HopfAlg \rightleftarrows \SAlg{2} \colon B,\] and therefore, there are mapping spaces, homotopy colimits, etc, in the category of Hopf algebras.

The original purpose of this work was to enable the study of $\mathcal S_2$ algebras $A$ in terms of relatively simpler Hopf algebras $BA$. Mandell's Theorem \cite{mandell} (called the Main Theorem in the paper) says that the $\mathcal S$-algebra structure on the cochains $S^*(X)$ of a nice space $X$ is enough to determine the homotopy type of $X$. Since $\mathcal S$ is filtered by suboperads $\mathcal S_n \subset \mathcal S_{n+1} \subset \mathcal S$ one can naturally ask: what homotopy information about $X$ remains if we just consider these ``simpler'' structures? Our eventual goal is to prove that for spaces that are sufficiently connected and relatively low dimensional, $S^*(X)$ is equivalent as an $\mathcal S_2$-algebra to a strictly commutative algebra. The idea is to study instead the Hopf algebra $BS^*(X)$ and show that it is equivalent as a Hopf algebra to a strictly commutative Hopf algebra. This connects to work of Anick \cite{anick}, who showed that for certain $\mathcal S_3$ algebras $A$, $BA$ is equivalent in a weak sense to a commutative Hopf algebra of the form $(UL)^\vee$, the dual of the universal enveloping algebra of a Lie algebra. We prove in the final section that $\Omega (UL)^\vee$ is equivalent as an $\mathcal S_2$ algebra to $C^*(L)$, the Chevalley--Eilenberg cochain complex. This shows that if $BA$ is equivalent as a Hopf algebra to some $(UL)^\vee$, then $A$ is equivalent as an $\mathcal S_2$ algebra to a commutative algebra. We will show in a future paper that for spaces $X$ as above, $BS^*(X)$ is equivalent as a Hopf algebra to $(UL_X)^\vee$, thus $S^*(X)$ is equivalent as an $\mathcal S_2$-algebra to $C^*(L_X)$ which is a strictly commutative algebra.

We now briefly outline the structure of the paper. We enhance the classical bar and cobar constructions to functors $B: \SAlg{2} \to \HopfAlg$, and $\Omega: \HopfAlg \to \SAlg{2}$, respectively, in Sections \ref{barhopf} and \ref{COBAR}. We then study the strict left adjoint of $B$, called the $\mathcal S_2$ cobar construction, $\widetilde \Omega : \HopfAlg \to \SAlg{2}$ and show that there is a natural equivalence of functors $\widetilde \Omega \to \Omega$; this occupies most of Section \ref{tildecobar}. Finally, in Section \ref{chevalley}, we study the $\mathcal S_2$ algebra $\Omega (UL)^\vee$.

I would like to thank Beno\^{i}t Fresse for sharing his work on the cobar construction during a visit in Lille, and for many helpful comments and references. I would also to thank Kathryn Hess for many helpful suggestions on the writing and organization of the paper. Finally, I would like to thank Mike Mandell for providing the original problem and references that gave rise to this work, guiding its development as my advisor, and for many helpful suggestions both mathmatical and stylistic.

\setcounter{section}{-1}

\section{Conventions and Definitions}\label{conv}
In this section we establish conventions and sketch definitions for the notions used in this paper.

Let $R$ denote a commutative ring.

Our convention is that chain complexes $C$ will have differential of degree $-1$. Sometimes, we use ``upper grading" $C^n$ which should be interpreted as notation for $C_{-n}$. We also recall the standard symmetric monoidal structure on the category of chain complexes. In particular, the tensor product $C\otimes D$, and the internal hom $\Hom (C,D)$. We use $C^\vee$ to denote the linear dual chain complex $\Hom (C, R)$. Given a chain complex $C$ we define its suspension $\Sigma C$ to be the chain complex $R[1]\otimes C$, where $R[1]$ is the chain complex given by $R$ concentrated in degree $1$. Similarly we define the desuspension of a chain complex $C$ to be $\Sigma^{-1} C = R[-1]\otimes C$. The signs are determined by the Koszul rule.
We also define a map of chain complexes $f: C\to D$ to be a \emph{weak equivalence} if it induces an isomorphism on homology.

We use $S_*(X, R)$ and $S^*(X,R)$ for simplicial (or singular) chains and cochains, respectively, with coefficients in $R$. If the coefficients are $R = \Z$, we use $S_*(X)$ and $S^*(X)$.

Let $A$ be a chain complex. We say that $A$ is an \emph{associative algebra} provided it is equipped with maps of chain complexes $m: A\otimes A \to A$, $\epsilon: R \to A$, and $\eta : A \to R$ called multiplication, unit and augmentation
respectively. The multiplication map is associative, the unit map satisfies unit conditions, and the augmentation
is a homomorphism of the multiplication and unit maps from $A$ to $R$. For brevity, we will usually use the term \emph{algebra} for associative algebras, and we will specify if we mean some other kind of algebra.

We can write the condition that $m$ is a map of chain complexes element-wise: for $a,b \in A$ we have $d(ab) = d(a)b + (-1)^{|a|}a d(b)$, this is called the \emph{Leibniz Rule}. A map of algebras $f: A\to A'$ is a map of chain complexes that preserves the multiplication, unit and augmentation maps.

There is an obvious dual to the structure of an algebra that is important in this work. Let $C$ be a chain complex. We say that $C$ is a \emph{coassociative coalgebra} provided that it is equipped with maps of chain complexes $\Delta: C\to C\otimes C$, $\epsilon: C\to R$, and $\eta: R\to C$, called comultiplication, counit, and
coaugmentation, respectively. The comultiplication map is required to be coassociative, the counit map satisfies
counit conditions, and the augmentation is a homomorphism of the comultiplication and counit maps from $C$ to $R$.
A map of coalgebras is a map of chain complexes that preserves the comultiplication, counit and coaugmentation maps. Again, we will usually use the term \emph{coalgebra} for coassociative coalgebras.

A coalgebra $C$ is called \emph{conilpotent} if, for all $c\in C$, there exists $N$ such that for $k\ge N$, $\ol \Delta^{k}(c) = 0$. We make the following assumption throughout the paper.
\begin{assume*}All coalgebras are conilpotent.
\end{assume*}

Finally, if we combine an algebra and a coalgebra structure coherently, we obtain the structure of a Hopf algebra. A \emph{Hopf algebra} $H$ is simultaneously a (coassociative) coalgebra and an (associative) algebra such that the multiplication map $m: H\otimes H \to H$ is a map of coalgebras, or equivalently, the comultiplication map $\Delta: H\to H\otimes H$ is a map of algebras. The corresponding commutative diagram is as follows.
\[
\xymatrix{H\otimes H \ar[rr]^m \ar[d]_{\Delta\otimes \Delta} && H \ar[dd]^\Delta\\
H\otimes H\otimes H \otimes H \ar[d]_{1\otimes \tau \otimes 1} && \\
H\otimes H\otimes H\otimes H \ar[rr]_{m\otimes m}&&H\otimes H}
\] where $\tau$ is the twist map.
Note that the counit is an augmentation of the underlying algebra of $H$, and the unit is a coaugmentation of the underlying coalgebra. A map of Hopf algebras is simultaneously a map of algebras and coalgebras.

For an algebra $A$, with augmentation $\epsilon$, we define $\ol A = \ker \epsilon$ and call this the \emph{augmentation ideal}. For a coalgebra $C$, with coaugmentation $\eta$, we define $\ol C = \coker \eta$ and call this the \emph{coaugmentation coideal}. We find that $C$ splits as a chain complex, $C = R \oplus \ol C$. Thus, we can define the reduced diagonal
$\ol \Delta: C\to \ol C\otimes \ol C$ by the composition
\[
\xymatrix{C \ar[r]^\Delta & C\otimes C \ar[r]& \ol C \otimes \ol C }
\]
We will sometimes view $\ol C \otimes \ol C$ as a submodule of $C\otimes C$, which will be clear from the context. We will systematically use sumless notation in this paper. In particular, the ordinary diagonal will be denoted $\Delta (c) = c^{(1)}\otimes c^{(2)}$,
and the reduced diagonal will be denoted $\ol \Delta(c) = \ol c^{(1)} \otimes \ol c^{(2)}$.

A \emph{symmetric sequence of chain complexes} $\mathcal M$ is a collection of chain complexes $\mathcal M(k)$, $k\ge 0$, such that each
$\mathcal M(k)$ is equipped with a $\Sigma_k$ action, where $\Sigma_k$ denotes the symmetric group on $k$ letters. A map of symmetric sequences of chain complexes is a collection of equivariant maps on each component complex. An \emph{operad} is a $\Sigma$ complex $\mathcal P$ together with composition maps \[\gamma: \mathcal P(k)\otimes \mathcal P(i_1)\otimes \cdots \otimes \mathcal P(i_k) \to \mathcal P(i_1 + \cdots + i_k)\]
 \[p\otimes q_1 \otimes \cdots \otimes q_k \mapsto p(q_1, \ldots, q_k)\]and a unit map $\epsilon: R \to \mathcal P(1)$ satisfying appropriate associativity, equivariance, and unit conditions. We also use the notation $p\circ_k q$ for $p(1,\ldots, 1, q, 1, \ldots, 1)$ where $1 = \epsilon(1_R)$, and $q$ is in the $k$th spot.
A map of operads is a map of symmetric sequences of chain complexes that preserves the composition and unit maps. We will insist for the purposes of this work that the unit map is an isomorphism, and also that $\mathcal P(0) = R$.

Let $\mathcal P$ be an operad. A chain complex $A$ is called a $\mathcal P$-algebra if it is equipped with evaluation maps $\{\mathcal P(k) \otimes A^{\otimes k} \to A \mid k\ge 0 \}$ satisfying associativity, equivariance and unit conditions. A map of $\mathcal P$-algebras is a map of chain complexes that preserves the evaluation maps.  A subcomplex $I \subset A$ of a $\mathcal P$-algebra is called an \textit{ideal} if $p(a_1, \ldots, a_k) \in I$ whenever some $a_i \in I$. In this case the quotient $\mathcal P$-algebra is well-defined and denoted by $A/I$.

A chain complex $C$ is called a $\mathcal P$-coalgebra if it is equipped with coevaluation maps $\{\mathcal P(k) \otimes C\to C^{\otimes k} \vert k\ge 0 \}$ satisfying the appropriate dual conditions. A map of $\mathcal P$-coalgebras is a map of chain complexes that preserves the coevaluation maps.

Define the tensor product of symmetric sequences of chain complexes $\mathcal M$ and $\mathcal N$, by $\mathcal M\otimes \mathcal N(k) = \mathcal M(k) \otimes \mathcal N(k)$. If $\mathcal P$ and $\mathcal Q$ are operads, then $\mathcal P \otimes \mathcal Q$ inherits the structure of an operad. An operad $\mathcal H$ is called a \emph{Hopf operad} if it is equipped with a coassociative map of operads $\mathcal H \to \mathcal H \otimes \mathcal H$. If $A$ is an $\mathcal H$-algebra, then $A\otimes A$ inherits the structure of an $\mathcal H$-algebra. If $A$ is also a (coassociative) coalgebra, then we say that $A$ is an \emph{$\mathcal H$ Hopf algebra} provided that the comultiplication $A\to A\otimes A$ is a map of $\mathcal H$ algebras. Similarly, if $C$ is an $\mathcal H$-coalgebra, then $C\otimes C$ inherits the structure of an $\mathcal H$-coalgebra. Suppose that $C$ is also an algebra. We say that $C$ is an \emph{$\mathcal H$ Hopf coalgebra} provided that the multiplication map $C\otimes C \to C$ is a map of $\mathcal H$-coalgebras.

\begin{ex}The associative operad $\mathcal Ass$ given by $\mathcal Ass(k) = R [\Sigma_k]$ is a Hopf operad whose algebras are associative algebras, and whose coalgebras are coassociative coalgebras. Furthermore, an $\mathcal Ass$ Hopf algebra (or $\mathcal Ass$ Hopf coalgebra) is exactly a Hopf algebra.
\end{ex}

\begin{ex}The commutative operad $\mathcal Com$ given by $\mathcal Com(k) = R$ with trivial $\Sigma_k$ action, is a Hopf operad whose algebras are commutative algebras, and whose coalgebras are cocommutative coalgebras.
\end{ex}

Since we insist that all operads $\mathcal P$ satisfy $\mathcal P(0) = R$ and $\mathcal P(1) = R$, we find that there is a canonical map $\mathcal P \to \mathcal Com$ of operads. Therefore, $R$ itself is a $\mathcal P$ algebra and a $\mathcal P$ coalgebra for any operad $\mathcal P$. Thus, it makes sense to speak of augmentations $X\to R$ (coaugmentations $R\to X$) for $\mathcal P$-algebras ($\mathcal P$-coalgebras) $X$. We will assume in general that all $\mathcal P$-algebras are augmented, and all $\mathcal P$-coalgebras are coaugmented. This is equivalent to working nonunitally, however for many purposes it is more convenient to have the unit or counit available. For clarity, we state this assumption separately.
\begin{assume*}All $\mathcal P$-algebras are augmented and all $\mathcal P$-coalgebras are coaugmented. In particular, (co)associative (co)algebras are (co)augmented.
\end{assume*}

Let $\mathcal P$ be an operad, and let $X$ be a chain complex. We define a $\mathcal P$ algebra $\mathbb P X$ as follows. As a complex, $\mathbb P X = \bigoplus \mathcal P(r) \otimes_{\Sigma_r} X^{\otimes r}$, with operad action given by \begin{align*}p(p_1(x_{1,1}, \ldots, x_{1,k_1}), \ldots, p_m(x_{m,1}, \ldots, x_{m, k_m}))=\\  p(p_1,\ldots, p_m)(x_{1,1}, \ldots, x_{1,k_1}, \ldots, x_{m, 1}, \ldots, x_{m, k_m}).\end{align*} We can interpret this as giving $\mathbb P$ the structure of a monad on the category of chain complexes, and the algebras over $\mathbb P$ are exactly the same as the $\mathcal P$ algebras. Note that because of our assumptions on $\mathcal P$, i.e. $\mathcal P(0) = R$, $\mathbb P X$ has a natural unit and augmentation. Note further that $\mathbb P (X)$ is a free $\mathcal P$ algebra on the generating chain complex $X$.

An operad $\mathcal P$ is called an \emph{$E_\infty$ operad} if the map $\mathcal P \to \mathcal Com$ is a weak equivalence, and each $\mathcal P(k)$ is $\Sigma_k$ projective and concentrated in nonnegative degrees. Thus, $\mathcal Com$ itself is not an $E_\infty$ operad, if the ring $R$ does not have enough inverses. An operad $\mathcal P$ is called an \emph{$E_n$ operad} if each $\mathcal P(k)$ is $\Sigma_k$ projective and concentrated in nonnegative degrees, and $\mathcal P$ is weakly equivalent to $S_*(\mathcal C_n)$ as an operad, where $\mathcal C_n$ is the topological operad of little $n$-cubes as defined in \cite{maygils}.

In \cite{McSmith2}, McClure--Smith constructed an $E_\infty$ operad $\mathcal S$ called the sequence operad, and showed
that it acts naturally on simplicial chains (as a coalgebra), and simplicial cochains (as an algebra). McClure--Smith also defined a complexity filtration $\mathcal S_1 \subset \mathcal S_2 \subset \cdots \subset \mathcal S$ such that $\mathcal S_n$ is an $E_n$ operad, and $\mathcal S_1$ is the associative operad. The elements of $\mathcal S(k)_l$ can be represented by formal linear combinations of non-degenerate surjections $f: \{1,\ldots, l+k\} \to \{1, \ldots, k\}$, where non-degenerate means that $f(i) \neq f(i+1)$, for all $i$. We will often write elements of this operad as ordered lists of integers, for $f$ the corresponding list is denoted $(f(1),f(2),\cdots, f(l+k))$. The complexity filtration can be expressed in terms of the number of times any given pair of integers switches order in the list. For example, $(1,2,1,2)$ has complexity $3$ and thus lives in $\mathcal S_3$.

In \cite{bergfress}, Berger--Fresse constructed another $E_\infty$ operad $\mathcal E$ called the Barratt--Eccles operad, and an operad map $\mathcal E \to \mathcal S$, which is therefore a weak equivalence. The operad $\mathcal E$ is given by simplicial chains on a simplicial operad given in arity $k$ by $E\Sigma_k$ the standard contractible $\Sigma_k$-complex. The operad structure comes from the simplicial operad structure on the discrete operad $\mathcal Ass$. Berger--Fresse also define a complexity filtration, coming from a filtration on the simplicial level, $\mathcal E_1 \subset \mathcal E_2 \subset \cdots \subset \mathcal E$ such that the map $\mathcal E \to \mathcal S$ restricts to maps $\mathcal E_n \to \mathcal S_n$, which are weak equivalences of operads. Furthermore, $\mathcal E_1$ is also the associative operad, so the map $\mathcal E_1 \to \mathcal S_1$ is an isomorphism. In addition, since each $\mathcal E_n$ is given by chains on a simplicial operad, each $\mathcal E_n$ is a Hopf operad.

The signs in $\mathcal E$ are thus implicitly determined by the signs in the Eilenberg-Zilber map $S_*(X)\otimes S_*(Y) \to S_*(X\times Y)$, and the signs in the chain differential. Since the morphism $\mathcal E \to \mathcal S$ is surjective, this implicitly determines the operad structure and the signs on $\mathcal S$. We should note that these signs for $\mathcal S$ differ from the signs given by McClure--Smith in \cite{McSmith2}. We use these signs because the signs on $\mathcal E$ are natural, and we would like the $\mathcal E$ and $\mathcal S$ algebra structures to be directly compatible. In \cite{bergfress} (Section 2.2), the authors give explicit formulas for the action of $\mathcal S$, with this choice of signs, on chains (as a coalgebra), and cochains (as an algebra).

Note that we have arranged our conventions so that, for finite-type complexes $X$, $X$ is a $\mathcal P$-algebra if and only if $X^\vee$ is a $\mathcal P$-coalgebra. Similarly, for Hopf operads $\mathcal H$, $X$ is an $\mathcal H$ Hopf algebra if and only if $X^\vee$ is an $\mathcal H$ Hopf coalgebra. We will occasionally switch between these dual points of view without explicitly mentioning it.


\section{The Bar Construction of $\mathcal S_2$ Algebras}\label{barhopf}

The purpose of this section is to define and give first properties of the Hopf algebra structure on $BA$, the bar construction of an $\mathcal S_2$ algebra $A$. The results of this section are due to Gerstenhaber--Voronov \cite{GV}.

Let $A$ denote an algebra. We define the bar construction as follows: $BA = T(\Sigma\ol A) = R \oplus \Sigma \ol A \oplus \Sigma\ol A^{\otimes 2}\oplus \cdots$ with
comultiplication given by the tensor comultiplication, differential
defined by \begin{align*}d([a_1|\cdots |a_k]) &= -\sum_{i=1}^k (-1)^{m_i} [a_1 | \cdots |da_i|\cdots |a_k] \\& +\sum_{i=2}^k (-1)^{m_i} [a_1|\cdots|a_{i-1} a_{i}|\cdots |a_k],\end{align*} where
$m_i = \sum_{j < i} (|a_j| + 1)$, and obvious coaugmentation $R\to BA$. This gives
$BA$ the structure of a coalgebra.

\begin{defn}Let $C$ be a coalgebra and let $A$ be an algebra. An $R$-linear map $f: C\to A$ of degree $-1$ is called a twisting morphism if
$d\circ f + f\circ d = m\circ (f\otimes f) \circ \ol \Delta$
where $m: A\otimes A \to A$ is the multiplication map on $A$ and $\ol \Delta: C\to C\otimes C$ is the reduced diagonal.
\end{defn}

It is a theorem of \cite{HMS} (III, Prop 3.5) that coalgebra maps $C\to BA$ are in one to one correspondence with twisting morphisms $C\to A$. In particular, the
projection $BA\to A$ is the twisting morphism corresponding to the identity $BA\to BA$. Given a twisting morphism $C\to A$ one constructs a morphism of coalgebras $C\to BA$ by iterating diagonals.

For ease of exposition, we recall that an $\mathcal S_2$ algebra is an algebra $A$ over the operad $\mathcal S_2$ with signs as given in \cite{bergfress} (Section 1.2), and briefly outlined in Section \ref{conv}. We note that since the operad $\mathcal S_2$ has known generators and relations, we can describe the structure of an $\mathcal S_2$ algebra as an algebra together with \textit{brace operations} given by \[(1,2,1,\cdots, 1,n,1)(x, y_1, \ldots, y_{n-1}) = x\{y_1, \ldots, y_{n-1}\},\] satisfying appropriate associativity, differential, and mutiplication formulas. We observe that the $\mathcal S_2$ operation $(1,2,3,\cdots, n)$ corresponds to the $n$-fold product: $(1,2,3,\cdots, n)(a_1, \ldots, a_n) = a_1 \cdots a_n$. The relations are determined by computing \[(1,2,1,\cdots,1,n,1) \circ_1 (1,2,1,\cdots, 1,m,1),\] \[d ((1,2,1,\cdots, 1,n,1)),\] and \[(1,2,1,\cdots, 1,n,1) \circ_1 (1,2),\] using the explicit formulas given in \cite{bergfress}. Note that these signs differ from those given in \cite{GV} and in \cite{McSmith2}. We will give the associativity identity without explicit signs because, with the exception of $d((1,2,1)) = (2,1) - (1,2)$, the signs are not critical.
\begin{iden}\label{assoc}
\begin{align*}
x\{x_1,\ldots,x_m\}\{y_1,\ldots, y_n\} &= \sum \pm x\{y_1,\ldots,y_{i_1}, x_{1}\{y_{i_1 + 1},\ldots,y_{j_1}\},y_{j_1 + 1},\ldots\}
\end{align*}
where the sum runs over all sequences $0\le i_1\le j_1\le \ldots i_m\le j_m \le n$.
\end{iden}

\begin{iden}\label{DIFF}
\begin{align*}d(x\{y_1,\ldots,y_n\}) &= (-1)^{n}x\{y_1,\ldots,y_{n-1}\}y_n  \\
&+(-1)^{|x||y_1| + (n-1)|y_1|} y_1 x\{y_2,\ldots,y_n\} \\
&+\sum_{i=2}^n (-1)^{i-1} x\{y_1,\ldots,y_{i-1}y_{i},\ldots,y_n\}\\
&+(-1)^n dx\{y_1,\ldots,y_n\} + \sum_{i=1}^n (-1)^{\gamma_i} x\{y_1,\ldots,dy_i,\ldots,y_n\}.
\end{align*}
where $\gamma_i = n + |x| + \sum_{j=1}^{i-1} |y_j|$.
\end{iden}

\begin{iden}\label{mult}
\begin{align*}(xy)\{y_1,\ldots,y_n\} &= \sum_{i=0}^n (-1)^{\gamma_i} x\{y_1,\ldots,y_i\} y\{y_{i+1},\ldots,y_n\}
\end{align*}
where $\gamma_i = \left(\sum_{k = 1}^i |y_k| \right)|y| + (n-i)\left(|x| + \sum_{j=1}^i |y_j|\right)$.
\end{iden}

Recall also that we require algebras over operads to be augmented, and thus all brace operations vanish if one of the inputs is the unit.

\begin{ex}Suppose that $A$ is a commutative algebra. Define $a\{a_1, \ldots, a_n\} = 0$ for all $a, a_1, \ldots, a_n \in A$. Then, $A$ is an $\mathcal S_2$ algebra.
\end{ex}

We now recall a theorem of \cite{GV} that the structure of an $\mathcal S_2$ algebra on $A$ is equivalent to the structure of a Hopf algebra on $BA$ with a left non-decreasing multiplication (term to be defined below). The structure of a Hopf algebra on the coalgebra $BA$ is determined by a map of differential coalgebras $\cup: BA\otimes BA \to BA$ such that $\cup$ is associative. To say that $\cup$ is left non-decreasing means that the restriction of $\cup$ to $(\bigoplus_{k\ge m}\Sigma\ol A^{\otimes k}) \otimes BA$ lands in $\bigoplus_{k\ge m}\Sigma\ol A^{\otimes k} \subset BA$.

\begin{thm}[Gerstenhaber--Voronov] The structure of an $\mathcal S_2$ algebra on A is equivalent to an associative unital left non-decreasing map of coalgebras $BA\otimes BA\to BA$.
\end{thm}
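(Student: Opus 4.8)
The plan is to exploit that $BA=T(\Sigma\ol A)$ is the cofree conilpotent coalgebra on $\Sigma\ol A$, so that any map out of it is pinned down by its corestriction to the cogenerators $\Sigma\ol A$. Since $BA\otimes BA$ is again conilpotent, the correspondence cited from \cite{HMS} identifies maps of differential coalgebras $\cup\colon BA\otimes BA\to BA$ with twisting morphisms $\theta\colon BA\otimes BA\to A$, where $\theta=\Sigma^{-1}\circ\pi\circ\cup$ for $\pi\colon BA\to\Sigma\ol A$ the projection onto word length $1$. I would first record the explicit reconstruction $\cup=\sum_k(\pi\cup)^{\otimes k}\circ\ol\Delta^{k-1}$ of the product from its corestriction via iterated diagonals, since this is what lets me read off the word-length bookkeeping.

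The first reduction translates the structural hypotheses on $\cup$ into support conditions on $\theta$. Decomposing $BA\otimes BA=\bigoplus_{p,q}\Sigma\ol A^{\otimes p}\otimes\Sigma\ol A^{\otimes q}$, I claim that, for a unital map of coalgebras, being left non-decreasing is equivalent to $\pi\cup$ vanishing on $\Sigma\ol A^{\otimes p}\otimes BA$ for all $p\ge2$. One direction is immediate: if $\cup$ does not decrease left word length, then its length-$1$ corestriction kills everything of left length $\ge2$. For the converse I would use the reconstruction formula: each tensor factor of a deconcatenation can corestrict nontrivially only if it carries left length $\le1$, so distributing the $m$ left letters of an element of $\Sigma\ol A^{\otimes m}\otimes BA$ among the output letters, each receiving at most one, forces at least $m$ output letters, i.e. the output has word length $\ge m$. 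Unitality then pins down the low data: the $p=0$ slice is forced by the left-unit axiom to be the canonical projection $R\otimes BA\to\Sigma\ol A$, and the $n=0$ brace is forced by right unitality to be the identity $x\{\,\}=x$; the remaining $p=1$ data, after desuspension, is a family of operations $\ol A\otimes\ol A^{\otimes n}\to\ol A$ of degree $n$, which I define to be the braces $x\{y_1,\dots,y_n\}$.

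It then remains to match the three remaining axioms on $\cup$ with Identities \ref{DIFF}, \ref{mult}, and \ref{assoc}. Expanding the twisting-morphism (Maurer--Cartan) equation $d\theta+\theta d=m(\theta\otimes\theta)\ol\Delta$ componentwise in the left word length $p$ is the key computation. On $p=1$ the quadratic term produces exactly the two outer-product summands $x\{y_1,\dots,y_{n-1}\}y_n$ and $y_1\,x\{y_2,\dots,y_n\}$ (from splitting the single left letter into the first or the second tensor factor of the diagonal), while $\theta d$ and $d\theta$ supply the inner-product and internal-differential terms; this is Identity \ref{DIFF}. On $p=2$ the only surviving contributions are $\theta$ applied to the bar-differential term that multiplies the two left letters (dropping the left length from $2$ to $1$) on one side, and the sum $\sum_i x_1\{y_1,\dots,y_i\}\,x_2\{y_{i+1},\dots,y_n\}$ produced by the diagonal on the other, which is precisely Identity \ref{mult}; for $p\ge3$ both sides vanish for word-length reasons, so no further relations appear. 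Finally, associativity of $\cup$, being an equality of coalgebra maps, may be checked after corestriction to cogenerators, and its $p=1$ component yields the brace self-relation Identity \ref{assoc}. Running these computations in reverse shows conversely that braces satisfying \ref{assoc}, \ref{DIFF}, \ref{mult} assemble into a twisting morphism whose induced $\cup$ is associative, unital, and left non-decreasing, giving the asserted equivalence.

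The main obstacle I anticipate is not the structural correspondence, which is forced by cofreeness, but the sign bookkeeping: every passage through the suspension $\Sigma\ol A$, the degree $-1$ of $\theta$, and the Koszul rule in the deconcatenation diagonal contributes signs, and these must be reconciled with the specific signs of \cite{bergfress} built into Identities \ref{DIFF}, \ref{mult}, and \ref{assoc}. A secondary subtlety is the associativity-to-\ref{assoc} step, where one must correctly account for how iterating the left non-decreasing product distributes and nests the braces, matching the insertion combinatorics indexed by $0\le i_1\le j_1\le\cdots\le j_m\le n$.
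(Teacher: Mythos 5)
Your proposal is correct and takes essentially the same route as the paper's own proof: both use the correspondence of \cite{HMS} to identify the product $\cup$ with a twisting morphism $BA\otimes BA\to A$, invoke the left non-decreasing condition to reduce this to the restrictions $E_{n-1}\colon \Sigma A\otimes(\Sigma A)^{\otimes n-1}\to A$ that define the braces, and then match the twisting-morphism equation with Identities \ref{DIFF} and \ref{mult} and associativity of $\cup$ with Identity \ref{assoc}. Your componentwise expansion in left word length, and the two-directional argument that left non-decreasing is equivalent to the vanishing of the corestriction on left length $p\ge 2$, simply make explicit the bookkeeping that the paper's sketch leaves to the reader.
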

\begin{proof}The multiplication is uniquely determined by the associated twisting morphism $BA\otimes BA \to A$. By the left non-decreasing condition, this is uniquely determined by the restrictions $E_{n-1}: \Sigma A\otimes {\Sigma A}^{\otimes n-1} \to BA\otimes BA \to A$. In turn, these uniquely determine operations corresponding to the braces \[x\{y_1, \ldots, y_{n-1}\} =  E_{n-1}\circ (\Sigma\otimes \Sigma^{n-1})(x\otimes y_1\otimes \ldots\otimes y_{n-1}).\] One then needs to verify that the twisting morphism formula is equivalent to Identities \ref{DIFF} and \ref{mult} on the associated braces, and that the multiplication is associative if and only if Identity \ref{assoc} holds on the braces.
\end{proof}

\begin{thm}[Gerstenhaber--Voronov] The following is a commutative diagram of functors
\[\xymatrix{\SAlg{2} \ar[r]^B\ar[d] & \HopfAlg \ar[d]\\
\SAlg{1} \ar[r]^B & \CoAlg,}
\]
where the vertical functors are the forgetful ones.
\end{thm}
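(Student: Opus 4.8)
The plan is to observe that the statement reduces to a comparison of underlying coalgebras, since both vertical functors are forgetful and the bottom horizontal functor is the classical bar construction already defined at the start of this section. Concretely, I would show that for an $\mathcal{S}_2$ algebra $A$ the underlying coalgebra of the Hopf algebra $BA$ coincides, naturally in $A$, with the classical bar coalgebra $B(UA)$ of the underlying associative algebra $UA$, where $U: \SAlg{2} \to \SAlg{1}$ denotes the forgetful functor obtained by restricting along the operad inclusion $\mathcal{S}_1 \subset \mathcal{S}_2$.

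First I would compare the underlying graded complexes and the comultiplications. Both functors produce the tensor module $T(\Sigma\ol A)$, and in both cases the comultiplication is the tensor (deconcatenation) comultiplication, which is purely combinatorial and independent of any algebra structure on $A$. Hence the comultiplications agree on the nose.

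Next I would compare the differentials. The differential of the classical bar construction, as defined above, is built only from the internal differential $dA$ and the associative multiplication $m: A\otimes A \to A$. Both of these data are precisely what the forgetful functor $U$ retains: the underlying associative algebra $UA$ has product $(1,2)(a,b) = ab$ and the same internal differential. By the preceding Gerstenhaber--Voronov theorem, the Hopf algebra $BA$ has as its underlying coalgebra exactly this classical bar coalgebra, with the entire $\mathcal{S}_2$ structure---that is, the brace operations---encoded solely in the Hopf multiplication $\cup$. Therefore forgetting $\cup$ discards exactly the brace data and leaves the classical bar differential unchanged, so the two differentials coincide.

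Finally, functoriality on morphisms is immediate: a map $f$ of $\mathcal{S}_2$ algebras is in particular a map of associative algebras, and both the enhanced and the classical bar functors send it to the coalgebra map given levelwise by $[a_1|\cdots|a_k]\mapsto[fa_1|\cdots|fa_k]$. The only point requiring genuine care is the differential comparison, namely confirming that the braces contribute nothing to the differential of $BA$; but this is exactly the content of the setup of the previous theorem, where $BA$ denotes the classical bar coalgebra and the twisting morphism $BA\otimes BA \to A$ defining $\cup$ affects only the multiplication and not the differential. Thus the square commutes on objects and on morphisms.
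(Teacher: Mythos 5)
Your proposal is correct and follows essentially the same reasoning as the paper, which states this theorem without proof precisely because it holds by construction: the Hopf algebra $BA$ is defined as the classical bar coalgebra of the underlying associative algebra equipped with an extra multiplication $\cup$ determined by the braces, so forgetting $\cup$ recovers the classical bar construction on the nose. Your explicit verification that the comultiplication, differential, and action on morphisms all depend only on the underlying associative structure is exactly the content implicit in the paper's setup.
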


We will now study $S^*(X, \Z)$ as an $\mathcal S_2$-algebra for what is essentially the smallest nontrivial space: $X=S^1$. We will see that already in this case we have interesting structure. Let $A= H^*(S^1, \Z)$ considered as a commutative $\mathcal S_2$ algebra (with trivial braces). The following example was explained to the author by M. Mandell, and is an unpublished result of Mandell and N. Kuhn.
\begin{ex}[Kuhn--Mandell]The $\mathcal S_2$-algebra $S^*(S^1, \Z)$ is not equivalent as an $\mathcal S_2$ algebra to a commutative algebra, and in particular (in fact, equivalently), $S^*(S^1, \Z)$ is not equivalent to $A$. The Hopf algebra $BS^*(S^1, \Z)$ is given by numerical polynomials $N \subset \mathbb Q [t]$ where $t$ has degree zero, i.e., the polynomials $f(t) \in \mathbb Q[t]$ such that $f(n) \in \Z$ for all $n\in \Z$, which is generated by $\Z$-linear combinations of the following polynomials:
 \[\binom{t}{n} = \frac{t (t - 1) \cdots (t - (n-1))}{n!}. \]
 The Hopf algebra $BA$ is given by the free divided power algebra on one generator $D \subset \mathbb Q[t]$ given by $\Z$-linear combinations of $t^n/n!$.
\end{ex}

The proof of Kuhn--Mandell used sophisticated methods, we will now give a new proof using only straightforward computations. We use the simplicial model $S^1 = \Delta[1]/\partial \Delta[1]$. Let $x \in S^1(S^1, \Z)$ denote the dual of the $1$-simplex given by the quotient map $\Delta[1] \to S^1$. In \cite{bergfress}, Berger--Fresse (see \cite{bergfress} (Fact 3.3.4)) computed explicitly that, for $w\in \mathcal S_2(r)$ we have $w(x, \ldots, x) = 0$ unless $w = 121$ (up to permutations), in which case $121(x, x) = -x$. In brace notation, $x\{x\} = -x$.

Now consider the commutative algebra $A = H^*(S^1)$, as an $\mathcal S_2$ algebra (with trivial braces). The algebra $A$ is the only commutative algebra up to weak equivalence with cohomology $A$, which is easy to see if the ground ring contains $1/2$; a longer argument, which we omit, proves the general case.

We can distinguish $A$ from $S^*(S^1, \Z)$ as $\mathcal S_2$ algebras using extra structure given by the Steenrod operation $\Sq^{n-1}([x])$ where $|x| = -n$, which we will now explain. Let $X$ be an $\mathcal S_2$ algebra over $\Z$. Let $z\in X\otimes \Z/2$ be a cycle with $|z| = -n$, then define $\Sq^{n-1}(z) = z\{z\} \in X\otimes \Z/2$. This defines a function $\Sq^{n-1}: H^n(X\otimes \Z/2)\to H^{2n-1}(X\otimes \Z/2)$, natural in $X$, which will not necessarily be a homomorphism (unless, for example, $X$ is an $\mathcal S_3$ algebra).
If $X$ is commutative, then $Sq^{n-1} = 0$ for all $n$. On the other hand, we have computed $Sq^0([x]) = [x] \neq 0$ in $H^1(S^*(S^1, \Z/2))$. Thus, $S^*(S^1, \Z)$ is not equivalent as an $\mathcal S_2$ algebra to a commutative algebra, or equivalently, it is not formal as an $\mathcal S_2$ algebra.

Now, we compute the Hopf algebra $BS^*(S^1, \Z)$. We introduce the notation $t_n = [x|\cdots| x]$ in the Hopf algebra $BS^*(S^1, \Z)$. One can compute:
\begin{align*}
t_1t_n &= \sum (-1)^\epsilon [x_1|\cdots |x_{i_1}|x_{1}\{x_{i_1 + 1},\ldots ,x_{j_1}\}|x_{j_1 + 1}|\cdots|x_n],
\end{align*}
where the sum is over all sequences $0\le i_1 \le j_1 \le n$, and $x_k = x$ for all $k$. There are two cases in which we get nonzero terms: when $j_1 = i_1 + 1$, in which case we get a $t_n$ term, and when $j_1 = i_1$, in which case we get a $t_{n+1}$ term. By inspecting these cases, we get $n$ terms of the first type and $n+1$ terms of the second type. One can compute that the sign is always $+1$.
One can also compute that $t_nt_1 = t_1t_n$, and so:
\[
t_1 t_n  = t_n t_1 = nt_n + (n+1)t_{n+1}.
\]
Now, if we use the Hopf algebra embedding $BS^*(S^1, \Z) \to BS^*(S^1, \mathbb Q)$, we see that $BS^*(S^1, \mathbb Q)$ is a polynomial algebra on a primitive $t_1$, and that the embedding $BS^*(S^1, \mathbb Z) \to BS^*(S^1, \mathbb Q)$ maps \[t_n \mapsto \frac{t_1 (t_1 - 1) \cdots (t_1 - (n-1))}{n!} = \binom{t_1}{n}.\]
Therefore, we see that $BS^*(S^1, \Z) \cong N$. Here, we use a well-known fact from algebra that the polynomials $\binom{t}{n}$ generate all numerical polynomials over $\Z$.

From this computation it is clear that in $BA$ we have the same linear structure, with generators denoted by $t'_i$. The multiplicative relations are given by \[t'_1 t'_n = t'_n t'_1 = (n+1) t'_{n+1}.\] Thus, via the embedding $A \to A\otimes \mathbb Q$, we find that $BA \cong D$, with $t'_n \mapsto t^n/n!$.

\section{The Cobar Construction of Hopf algebras}\label{COBAR}

Classically, the cobar construction is a functor $\Omega: \CoAlg \to \Alg$, which is left adjoint to the bar construction. In \cite{jAdams} (p. 36), Adams observes that over $\Z /2$ the cobar construction of a Hopf algebra $C$, denoted $\Omega C$, can be equipped with a $\cup_1$ product satisfying
\[
d(x\cup_1 y) = dx \cup_1 y + x \cup_1 dy + xy + yx.
\]
In \cite{tKad}, Kadeishvili proves that this product can be extended to the structure of an $\mathcal S_2$ algebra on $\Omega C$, with $x\{y\} = x\cup_1 y$. The purpose of this section is to work out appropriate signs to show that $\Omega C$, where $C$ is a Hopf algebra, has the structure of an $\mathcal S_2$ algebra over any ground ring $R$. We will then characterize maps $C \to BA$ of Hopf algebras, and in particular observe that the unit map of the adjunction $C\to B\Omega C$ is a map of Hopf algebras.

Let $C$ denote a coalgebra. We define the cobar construction as follows: $\Omega C = T(\Sigma^{-1} \ol C)$ with tensor multiplication, differential
determined by \[d([c]) = -[dc] + (-1)^{|c^{(1)}|} [\ol c^{(1)} | \ol c^{(2)}]\] and the Leibniz Rule, and the obvious augmentation $\Omega C \to R$. This gives $\Omega C$ the
structure of an algebra.

Restricting a morphism $\Omega C \to A$ of algebras $C\to \Omega C \to A$ gives a twisting morphism $C\to A$ (note the degree shift in the inclusion $C\to \Omega C$). This correspondence is a bijection. Thus, the functor $\Omega: \CoAlg \to \Alg$ is adjoint to the functor $B: \Alg \to \CoAlg$.

Observe that if $C$ and $A$ are finite type, then $(\Omega C)^\vee \cong B(C^\vee)$ and $(BA)^\vee \cong \Omega (A^\vee)$.

We now proceed to the goal of this section. We will assume that $C$ is a Hopf algebra, and construct braces such that $\Omega C$ is an $\mathcal S_2$ algebra.
The formula for the first brace is:
\[[x]\{[y_1 | \cdots | y_q]\} = (-1)^\alpha [x^{(1)}y_1 | \cdots | x^{(q)}y_q]\] where $\Delta^{(q)}(x) = x^{(1)}\otimes \cdots \otimes x^{(q)}$ is
the $q$th iterated diagonal, and \[\alpha = \sum_{j=2}^q \left [|x^{(j)}|\sum_{k<j} (|[y_k]|)\right] + |x| - 1.\]

First, we need to verify Identity \ref{DIFF} for the first brace, i.e., we need to verify the following formula.
\begin{lem}
\begin{align*}d([x]\{[y_1 | \cdots | y_q]\}) &= -[x|y_1|\cdots|y_q] + (-1)^{|[x]|(\sum |[y_i]|)}[y_1|\cdots|y_q|x] + \\
&- d[x]\{[y_1|\cdots|y_q]\} + (-1)^{|x|}[x]\{d[y_1|\cdots|y_q]\}.
\end{align*}
\end{lem}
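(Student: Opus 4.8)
The plan is to verify Identity \ref{DIFF} for the first brace by a direct computation, carefully tracking the Koszul signs introduced by the cobar differential and the iterated diagonal. The left-hand side is $d([x]\{[y_1|\cdots|y_q]\})$, which by definition equals $d$ applied to $(-1)^\alpha[x^{(1)}y_1|\cdots|x^{(q)}y_q]$. I would apply the cobar differential via the Leibniz Rule to this length-$q$ word, producing two families of terms: the internal differentials $d(x^{(i)}y_i)$ in each slot, and the diagonal terms $(-1)^{|(x^{(i)}y_i)^{(1)}|}[\cdots|\ol{(x^{(i)}y_i)}^{(1)}|\ol{(x^{(i)}y_i)}^{(2)}|\cdots]$ that split slot $i$ into two. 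The strategy is then to reorganize these terms to match the four terms predicted on the right-hand side.

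First I would handle the internal differential terms. Using the Leibniz Rule $d(x^{(i)}y_i) = d(x^{(i)})y_i + (-1)^{|x^{(i)}|}x^{(i)}d(y_i)$ and the compatibility of the differential with the diagonal (so that $\sum_i x^{(1)}\otimes\cdots\otimes dx^{(i)}\otimes\cdots = $ the diagonal applied to $dx$), the $dx$-pieces should collect into the term $-d[x]\{[y_1|\cdots|y_q]\}$, and the $dy_i$-pieces into $(-1)^{|x|}[x]\{d[y_1|\cdots|y_q]\}$. The sign bookkeeping here is delicate: I must check that the sign $\alpha$ from the brace definition combines correctly with the Koszul sign of commuting $d$ past the preceding factors and with the sign $(-1)^{|x|}$ appearing in the definition of the cobar differential $d[c] = -[dc]+(-1)^{|c^{(1)}|}[\ol c^{(1)}|\ol c^{(2)}]$.

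Next I would treat the diagonal-splitting terms, which are the crux of the argument. Because $C$ is a \emph{Hopf} algebra, the diagonal is multiplicative, so $\ol\Delta(x^{(i)}y_i)$ expands via the Hopf compatibility into cross-terms mixing the coproducts of $x^{(i)}$ and $y_i$. Invoking coassociativity, the iterated diagonal $\Delta^{(q)}$ of $x$ refines, and most of these split terms should telescope against one another, leaving only boundary contributions. My expectation is that the extreme cases — where the split occurs so that $x$ is entirely pushed to the left or right — produce precisely the two outer terms $-[x|y_1|\cdots|y_q]$ and $(-1)^{|[x]|(\sum|[y_i]|)}[y_1|\cdots|y_q|x]$, while all intermediate splits cancel in pairs. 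This cancellation is where I anticipate the main obstacle: verifying that the Hopf-multiplicativity of $\Delta$ produces exactly matching terms with opposite signs requires careful use of coassociativity to align the iterated coproduct indices, and the Koszul signs must be shown to differ by a sign across each cancelling pair.

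The hardest part will be organizing this cancellation cleanly and confirming every sign, since the brace sign $\alpha$ contains the degree-weighted sum $\sum_j|x^{(j)}|\sum_{k<j}|[y_k]|$ whose terms shift as the coproduct is reindexed. I would structure the proof by first establishing the needed reindexing identity for $\alpha$ under coassociativity, then matching terms in the order: internal $dx$ terms, internal $dy$ terms, telescoping interior splits, and finally the two surviving boundary splits. I expect that once the reindexing lemma for $\alpha$ is in hand, the rest is careful but routine verification.
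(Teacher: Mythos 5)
Your plan attempts the general case directly, which is in fact more than the paper does (the paper verifies the identity only for $x, y_1, \ldots, y_q$ primitive with zero differential, computes the brace explicitly, telescopes, and asserts that the general bookkeeping ``poses no additional problems''). However, your accounting of which terms on the left produce which terms on the right contains a genuine error that would prevent the proof from closing. You treat $-d[x]\{[y_1|\cdots|y_q]\}$ and $(-1)^{|x|}[x]\{d[y_1|\cdots|y_q]\}$ as receptacles for the internal-differential pieces only. But in the cobar construction $d[x] = -[dx] + (-1)^{|x^{(1)}|}[\ol x^{(1)}|\ol x^{(2)}]$, and $d[y_1|\cdots|y_q]$ likewise contains the splitting terms $\pm[y_1|\cdots|\ol y_i^{(1)}|\ol y_i^{(2)}|\cdots|y_q]$. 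The internal pieces $d(x^{(i)})y_i$ and $x^{(i)}d(y_i)$ on the left account only for $[dx]\{z\}$ and for the internal part of $(-1)^{|x|}[x]\{dz\}$, where $z = [y_1|\cdots|y_q]$; the coproduct parts of the right-hand side must therefore be produced by the splitting terms on the left, so your claim that ``all intermediate splits cancel in pairs'' leaving only the two boundary words cannot be correct.

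Concretely, writing $\Delta(x^{(i)}y_i) = \Delta(x^{(i)})\Delta(y_i)$ and refining by coassociativity, the splitting terms fall into three families: (i) splits using $\ol\Delta(y_i)$, which assemble over all $i$ and all of $\Delta^{(q+1)}(x)$ into exactly the coproduct part of $(-1)^{|x|}[x]\{dz\}$ --- these cancel nothing; (ii) splits where the whole $y_i$ stays in the left slot, leaving a bare non-unit factor of $x$ in slot $i+1$; (iii) splits where the whole $y_i$ goes to the right slot, leaving a bare non-unit factor of $x$ in slot $i$. Family (ii) at slot $i$ cancels family (iii) at slot $i+1$ --- this is the true telescoping --- but the surviving ends (family (iii) at $i=1$ and family (ii) at $i=q$) are sums over $\Delta^{(q+1)}(x)$ with a bare non-unit $x$-factor in the first or last slot. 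These contain the two boundary words only as the sub-sums in which all remaining $x$-factors are units; the rest is precisely $-(-1)^{|x^{(1)}|}[\ol x^{(1)}|\ol x^{(2)}]\{z\}$, expanded via Identity \ref{mult} as $\pm[\ol x^{(1)}]\left([\ol x^{(2)}]\{z\}\right) \pm \left([\ol x^{(1)}]\{z\}\right)[\ol x^{(2)}]$, i.e., the coproduct part of $-d[x]\{z\}$ that your accounting left unmatched. If you repair the matching in this way the remainder is routine sign verification; alternatively, restricting as the paper does to primitive $x, y_i$ with zero differential makes all of these extra families vanish ($d[x] = 0$ and $dz = 0$ there), and your telescoping picture then coincides exactly with the paper's computation of the signs $\beta_i$, $\gamma_i$ and the cancellation $\gamma_{i-1} = \beta_i - 1$.
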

\begin{proof}
We verify the formula in the case that $x, y_1, \ldots, y_q$ are all primitive elements with the zero differential. The general case requires more involved bookkeeping, but poses no additional problems By our assumptions, the formula reduces to the following
\begin{align*}d([x]\{[y_1 | \cdots | y_q]\}) &= -[x|y_1|\cdots|y_q] + (-1)^{|[x]|(\sum |[y_i]|)}[y_1|\cdots|y_q|x].
\end{align*}

By definition of the first brace,
\begin{align*}d([x]\{[y_1 | \cdots | y_q]\}) &= (-1)^\alpha \sum_{i=1}^q (-1)^{m_i}[x^{(1)}y_1 | \cdots| d([x^{(i)}y_i])| \cdots | x^{(q)}y_q],\\
\end{align*}
where $m_i$ is determined by the Koszul rule.

Since $x$ is primitive,
\[[x]\{[y_1 | \cdots | y_q]\}= (-1)^{|x|-1} \sum_{i=1}^q (-1)^{\alpha_i} [y_1| \cdots | xy_i | \cdots | y_n], \]
where $\alpha_i = |x|\sum_{k<i} (|y_k|-1)$. Thus, since $\ol \Delta (xy_i) = x\otimes y_i + (-1)^{|x||y_i|} y_i \otimes x$, we have that
\begin{align*}d([x]\{[y_1 | \cdots | y_q]\}) &= (-1)^{|x| - 1} \sum_{i=1}^q (-1)^{\beta_i}[y_1| \cdots | x | y_i | \cdots | y_n] +
\\&(-1)^{|x| - 1} \sum_{i=1}^q (-1)^{\gamma_i}[y_1| \cdots | y_i | x | \cdots | y_n],
 \end{align*}
 where \[\beta_i = \alpha_i + \sum_{k<i} (|y_k| - 1) + |x| \] and \[\gamma_i = \alpha_i + \sum_{k<i} (|y_k| - 1) + |x||y_i| + |y_i|.\]
Thus,
\begin{align*}\gamma_{i-1} &= \alpha_{i-1} + \sum_{k<i-1} (|y_k| - 1) +  |x||y_{i-1}| + |y_{i-1}| \\
&= \beta_i + |x||y_{i-1}| + |x| + |y_{i-1}| - 1 + |x| +  |x||y_{i-1}| + |y_{i-1}|\\
&= \beta_i - 1.
\end{align*}
So, we have a telescoping sum with remaining terms as follows,
\[(-1)^{\beta_1 + |x| - 1}[ x | y_1 | \cdots | y_n] + (-1)^{\gamma_n + |x| - 1}[y_1| \cdots | y_n | x],\]
and it is easy to verify that the signs are correct.
\end{proof}

We define $[x_1|\cdots|x_n]\{[y_1|\cdots|y_q]\}$ using Identity \ref{mult}. Identity \ref{mult} is consistent with associativity of the multiplication, i.e., \[(1,2,1,\cdots, 1,n,1) \circ_1 ((1,2)\circ_1 (1,2)) = (1,2,1,\cdots, 1,n,1) \circ_1 ((1,2)\circ_2 (1,2)),\] and so Identity \ref{mult} will necessarily be satisfied for the first brace. Similarly, Identity \ref{mult} is consistent with Identity \ref{DIFF}, i.e., \[d((1,2,1,\cdots, 1,n,1) \circ_1 (1,2)) = d((1,2,1,\cdots, 1,n,1))\circ_1 (1,2)\] and so by induction Identity \ref{DIFF} is also satisfied in general for the first brace.

Next, we need to define the higher braces. We start with the initial condition $[x]\{\beta_1,\ldots,\beta_p\} = 0$ for $p > 1$.
Then, we extend using Identity \ref{mult}. Now, the following expression,
\begin{align*} [x]\{\beta_1\}\beta_2 + (-1)^{|[x]||\beta_1| + |\beta_1|}\beta_1 ([x]\{\beta_2\})\\
- [x]\{\beta_1 \beta_2\} + d([x])\{\beta_1, \beta_2\}
\end{align*}
is zero because of the definition \[[x_1|x_2]\{\beta_1, \beta_2\} = (-1)^{|[x_2]||\beta_1| + |[x_1]| + |\beta_1|} [x_1]\{\beta_1\}[x_2]\{\beta_2\}.\] Thus, our initial condition is consistent with Identity \ref{DIFF} in the case $p = 2$. The corresponding formulas for $d([x]\{\beta_1,\ldots, \beta_p\})$ for $p > 2$ are completely trivial. As above, by consistency, it follows that Identities \ref{DIFF} and \ref{mult} hold in general for all braces.

Finally, we need to look at Identity \ref{assoc}. Consider the case \[[x]\{[y_1|\cdots|y_l]\}\{\beta_1,\ldots,\beta_n\}\] where $l\ge n$. Since the required operations mostly act by zero, we only need to look at one sign. One can compute that
\begin{align*}(1,2,1,\cdots, 1,(n+1),1)\circ_1 (1,2,1) &= \\(-1)^n (1,2,1)\circ_2 (1,2,1,\cdots, 1,(n+1),1) +
&\text{ terms acting by zero.}
\end{align*}

\begin{lem}\[[x]\{[y_1|\cdots|y_l] \}\{\beta_1,\ldots,\beta_n\} = (-1)^{n+ n|[x]|} [x]\{[y_1|\cdots|y_l]\{\beta_1, \ldots, \beta_n\}\}\]
\end{lem}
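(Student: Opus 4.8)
The plan is to read the claimed formula as the single-argument, $m = 1$ specialization of Identity \ref{assoc}, and then to use the initial condition $[x]\{\gamma_1,\ldots,\gamma_p\} = 0$ for $p > 1$ to collapse the resulting sum to a single term, after which only a sign remains to be determined. Writing $w = [y_1|\cdots|y_l]$, Identity \ref{assoc} applied with $m = 1$, first-level argument $w$, and second-level arguments $\beta_1,\ldots,\beta_n$ reads
\[
[x]\{w\}\{\beta_1,\ldots,\beta_n\} = \sum_{0\le i_1\le j_1\le n} \pm\,[x]\{\beta_1,\ldots,\beta_{i_1}, w\{\beta_{i_1+1},\ldots,\beta_{j_1}\}, \beta_{j_1+1},\ldots,\beta_n\}.
\]
Each term on the right is an outer brace $[x]\{\cdots\}$ carrying $i_1 + 1 + (n - j_1)$ arguments, so by the initial condition every term vanishes unless this count is $1$, which forces $i_1 = 0$ and $j_1 = n$. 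Thus the sum collapses to the single term $\pm\,[x]\{w\{\beta_1,\ldots,\beta_n\}\}$, and the substance of the lemma is the sign.

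Before computing that sign I would dispose of the range $l < n$: there both sides vanish identically. Indeed, expanding $w\{\beta_1,\ldots,\beta_n\}$ by Identity \ref{mult} distributes the $n$ arguments among the $l$ tensor factors of $w$, and since each single-letter factor absorbs at most one argument (again the initial condition), the expansion is empty when $l < n$; the same count shows the length-$l$ word $[x]\{w\}$ cannot absorb $n$ outer braces. This explains the hypothesis $l\ge n$.

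To fix the sign in the surviving term I would invoke the operad-level computation recorded just before the statement, namely
\[
(1,2,1,\cdots,1,(n+1),1)\circ_1 (1,2,1) = (-1)^n\,(1,2,1)\circ_2 (1,2,1,\cdots,1,(n+1),1)
\]
modulo operations acting by zero. Write $b = (1,2,1,\cdots,1,(n+1),1)$, an element of degree $n$, and $c = (1,2,1)$, of degree $1$, and evaluate both sides on the tuple $([x], w, \beta_1,\ldots,\beta_n)$. Evaluating $b\circ_1 c$ inserts $c$ into the first slot, with nothing to its left, so it contributes no Koszul sign and yields $([x]\{w\})\{\beta_1,\ldots,\beta_n\}$. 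Evaluating $c\circ_2 b$ inserts the degree-$n$ element $b$ into the second slot, so the Koszul rule for the action map contributes $(-1)^{|b|\,|[x]|} = (-1)^{n|[x]|}$ from moving $b$ past $[x]$, and yields $(-1)^{n|[x]|}[x]\{w\{\beta_1,\ldots,\beta_n\}\}$. Combining with the operadic sign $(-1)^n$ produces exactly the factor $(-1)^{n + n|[x]|}$ asserted.

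The hard part is precisely this last sign bookkeeping: one must check that the Koszul sign emitted by the action map when the degree-$n$ internal operation $b$ is commuted past the element $[x]$ (of degree $|[x]| = |x| - 1$) is exactly $(-1)^{n|[x]|}$, and that the operations discarded as ``acting by zero'' at the operad level correspond term-by-term to the summands killed by the initial condition on the algebra side. Confirming this compatibility between the operad-level and algebra-level vanishing is the only genuinely delicate step; everything else follows directly from Identity \ref{assoc}, Identity \ref{mult}, and the initial condition.
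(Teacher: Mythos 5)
Your proof is circular, and this is a genuine gap rather than a stylistic difference. At this point in the paper the braces on $\Omega C$ are only \emph{defined} --- the first brace by the diagonal formula $[x]\{[y_1|\cdots|y_l]\} = (-1)^\alpha[x^{(1)}y_1|\cdots|x^{(l)}y_l]$, higher braces on singletons by the initial condition, and extensions to words by Identity \ref{mult} --- and the task is to \emph{verify} that these definitions satisfy Identity \ref{assoc}; the lemma is precisely the decisive instance of that verification. Your very first step applies Identity \ref{assoc} (with $m=1$) to these operations, i.e., it assumes the identity whose validity is being established. Your sign computation has the same defect: ``evaluating'' $(1,2,1,\ldots,1,(n+1),1)\circ_1(1,2,1)$ and $(1,2,1)\circ_2(1,2,1,\ldots,1,(n+1),1)$ on the tuple $([x],w,\beta_1,\ldots,\beta_n)$ presupposes that these operad elements act on $\Omega C$, which is exactly what is under construction. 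The displayed operad identity preceding the lemma is not a tool for deriving the lemma; it is what identifies \emph{which} equation between the already-defined operations must be checked, namely that the associativity relation reduces, modulo operations acting by zero, to the lemma's formula.

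What a correct proof requires --- and what the paper does --- is a direct computation from the definitions: expand the left-hand side as $(-1)^\alpha[x^{(1)}y_1|\cdots|x^{(l)}y_l]\{\beta_1,\ldots,\beta_n\}$ and distribute the $\beta_j$ into $n$ consecutive letters using Identity \ref{mult} and the initial condition; expand the right-hand side by first distributing the $\beta_j$ into the letters of $[y_1|\cdots|y_l]$ and then applying the diagonal formula for $[x]\{-\}$; and match terms and signs. The matching is not formal: on the left the letter $[x^{(i)}y_i]\{\beta_j\}$ expands through the iterated diagonal of the \emph{product} $x^{(i)}y_i$, and this agrees with the corresponding terms on the right only because $\Delta$ is an algebra map, $\Delta(x^{(i)}y_i)=\Delta(x^{(i)})\Delta(y_i)$, together with coassociativity to regroup the copies of $x$. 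Your proposal never invokes the Hopf algebra structure of $C$ at all, which is the actual mathematical content of the lemma; that alone signals it cannot prove the statement. (Your remark that both sides vanish when $l<n$ is correct but tangential to the case $l\ge n$ treated by the lemma.)
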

\begin{proof}By definition, we have $[x]\{[y_1|\cdots|y_l]\} = (-1)^\alpha [x^{(1)}y_1 |\cdots |x^{(l)}y_l]$. Thus, \[[x]\{[y_1|\cdots|y_l] \}\{\beta_1,\ldots,\beta_n\} =\]
\[(-1)^\alpha \sum_{i=1}^{l-n} (-1)^{\delta_i}[x^{(1)}y_1|\ldots][x^{(i)}y_i]\{\beta_1\}\cdots [x^{(i+n)}y_{i+n}]\{\beta_n\}\}[\cdots|x^{(l)}a_l] \]
On the other hand,
\[
(-1)^{n+ n|[x]|} [x]\{[y_1|\cdots|y_l]\{\beta_1, \ldots, \beta_n\}\}=
\]
\[
(-1)^{n+n|[x]|}\sum_{i=1}^{l-n} (-1)^{\delta_i}[x]\{[y_1|\cdots][y_i]\{\beta_1\}\cdots [y_{i+n}]\{\beta_n\}[\cdots|y_l]\}
\]
As a consequence of our definitions, we see that we get the same terms, and a trivial but tedious check shows that the signs work out.
\end{proof}
Thus, we have a proof of Identity \ref{assoc} in this case. Since Identity \ref{assoc} is consistent with Identity \ref{mult}, by induction, Identity \ref{assoc} holds in general.

This concludes the proof
that the braces defined above satisfy Identities \ref{assoc}, \ref{DIFF} and \ref{mult}.

We have now established that if $C$ is a Hopf algebra then $\Omega C$ is an $\mathcal S_2$ algebra, and we clearly have a functor $\Omega: \HopfAlg \to \SAlg{2}$. We would now like to study the universal properties of $\Omega$ and $B$.

Suppose that $C$ is a Hopf algebra, and let $C\to A$ be a twisting morphism. To say that the associated coalgebra map $C\to BA$ is a map of Hopf algebras boils
down to the commutativity of the diagram:
\[
\xymatrix{C\otimes C\ar[r]\ar[d]&BA\otimes BA \ar[d]\\
C\ar[r]& BA.}
\]
Thus, we need to compare the two twisting morphisms $C\otimes C \to A$ coming from the two ways of following the diagram $C\otimes C \to BA$ followed by
the universal twisting morphism $BA\to A$. Consider $c_1, c_2 \in C$ and let \[\ol\Delta^{(k)}(c_2) = \ol c_2^{(1)}\otimes \cdots\otimes \ol c_2^{(k)}.\] If we let $f: C\to A$ denote the original twisting morphism, going around the top leads to the expression \[\sum (-1)^\alpha f(c_1)\{f(\ol c_2^{(1)}),\ldots, f(\ol c_2^{(k)})\}\] where $\alpha = k(|c_1|-1) + \sum_{i=1}^k (k-i)(|\ol c_2^{(i)}| - 1)$ and going around the bottom leads to $f(c_1c_2)$. This leads to the following characterization of Hopf algebra maps $C\to BA$.

\begin{defn}\label{hopftwist}Let $C$ be a Hopf algebra and let $A$ be an $\mathcal S_2$ algebra. A twisting morphism $f: C\to A$ is called \emph{Hopf} if
\[f(c_1c_2)=\sum (-1)^\alpha f(c_1)\{f(\ol c_2^{(1)}),\ldots, f(\ol c_2^{(k)})\}, \] for all $c_1, c_2 \in \ol C$, where $\alpha = k(|c_1|-1) + \sum_{i=1}^k (k-i)(|\ol c_2^{(i)}| - 1).$
\end{defn}

\begin{prop}The canonical bijection between twisting morphisms $C\to A$ and morphisms of coalgebras $C\to BA$ restricts to a bijection between Hopf twisting morphisms $C\to A$ and morphisms of Hopf algebras $C\to BA$.
\end{prop}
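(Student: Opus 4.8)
The plan is to reduce the statement to the classical \cite{HMS} correspondence together with the explicit twisting-morphism computation carried out just before Definition \ref{hopftwist}. By that correspondence, the morphism $\phi_f \colon C\to BA$ associated to a twisting morphism $f\colon C\to A$ is automatically a map of coalgebras. Consequently $\phi_f$ is a map of Hopf algebras if and only if it additionally preserves the multiplication, i.e. if and only if the square
\[
\xymatrix{C\otimes C\ar[r]^{\phi_f\otimes \phi_f}\ar[d]_{m_C}&BA\otimes BA \ar[d]^{\cup}\\
C\ar[r]_{\phi_f}& BA}
\]
commutes. Thus the entire proposition amounts to identifying the commutativity of this square with the defining identity of a Hopf twisting morphism.

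The key structural observation I would make first is that \emph{both} composites around the square are maps of coalgebras $C\otimes C\to BA$, where $C\otimes C$ is given the tensor coalgebra structure. The composite $\cup\circ(\phi_f\otimes\phi_f)$ is a coalgebra map because $\phi_f$ is one and $\cup$ is a map of coalgebras by the Gerstenhaber--Voronov Hopf structure on $BA$; the composite $\phi_f\circ m_C$ is a coalgebra map because $m_C$ is a map of coalgebras---this is exactly the hypothesis that $C$ is a Hopf algebra---and $\phi_f$ is a coalgebra map. Having established this, the uniqueness clause of the \cite{HMS} bijection applies: two coalgebra maps $C\otimes C\to BA$ coincide if and only if the twisting morphisms $C\otimes C\to A$ obtained by postcomposing with the universal twisting morphism $p\colon BA\to A$ agree.

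It then remains to evaluate the two associated twisting morphisms. Postcomposing the lower path gives $p\circ\phi_f\circ m_C = f\circ m_C$, that is $c_1\otimes c_2\mapsto f(c_1c_2)$, using $p\circ\phi_f=f$ from the correspondence; postcomposing the upper path yields the expression $\sum (-1)^\alpha f(c_1)\{f(\ol c_2^{(1)}),\ldots,f(\ol c_2^{(k)})\}$ with the stated sign $\alpha$, already computed in the discussion preceding Definition \ref{hopftwist}. Their equality is precisely the identity defining a Hopf twisting morphism, so $\phi_f$ is a map of Hopf algebras if and only if $f$ is Hopf, and the original bijection restricts as claimed.

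The genuine computational obstacle---the sign bookkeeping needed to identify the upper twisting morphism with $\sum(-1)^\alpha f(c_1)\{\cdots\}$---has already been dispatched before the definition; what the proof contributes beyond that is the formal step of checking that both sides of the multiplication square are coalgebra maps, which is what licenses passing from commutativity of the square to an equality of twisting morphisms via the uniqueness in the \cite{HMS} correspondence.
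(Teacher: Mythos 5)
Your proof is correct and takes essentially the same route as the paper: the paper's own justification for this proposition is precisely the discussion preceding Definition \ref{hopftwist}, which reduces commutativity of the multiplication square to an equality of the two induced twisting morphisms $C\otimes C\to A$ and identifies them as $f(c_1c_2)$ and the brace expression, respectively. The one step you spell out that the paper leaves implicit---that both composites $C\otimes C\to BA$ are coalgebra maps, which is what licenses invoking the uniqueness clause of the \cite{HMS} correspondence---is a faithful completion of the same argument rather than a different approach.
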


We now establish a technical theorem about Hopf twisting morphisms that is useful for constructing such morphisms. This will play a key role in Section \ref{tildecobar}.

\begin{thm}\label{techtwist}Let $C$ be a Hopf algebra and let $A$ be an $\mathcal S_2$ algebra. Let $C' \subset C$ be a subcoalgebra with the property that if $c_1, c_2 \in C'$, then $d(c_1)c_2, c_1 d(c_2) \in C'$, and $\ol c_1^{(1)} \ol c_2^{(1)}, \ol c_1^{(2)} \ol c_2^{(2)}, \ol c_1^{(1)}\ol c_2^{(2)} \in C'$ . Let $f: C' \to A$ be a twisting morphism that satisfies the Hopf twisting morphism formula (see Definition \ref{hopftwist}) on elements of the form $d(c_1)c_2, c_1 d(c_2), \ol c_1^{(1)} \ol c_2^{(1)}, \ol c_1^{(2)} \ol c_2^{(2)}, \ol c_1^{(1)}\ol c_2^{(2)} $ for $c_1, c_2 \in C'$. Let $x, y \in C'$, and define \[\Gamma = \sum (-1)^\alpha f(x)\{f(\ol y^{(1)}),\ldots, f(\ol y^{(k)})\},\] where $\alpha = k(|x|-1) + \sum_{i=1}^k (k-i)(|\ol y^{(i)}| - 1)$. Then, \[d\Gamma + f(d(xy)) = m\circ (f\otimes f) \circ \ol \Delta (xy).\]
\end{thm}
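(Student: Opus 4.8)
The plan is to recognize $\Gamma$ as the value at $x\otimes y$ of a twisting morphism that is automatically available, so that the desired identity becomes a specialization of the twisting-morphism equation rather than a computation done from scratch. Concretely, let $\phi\colon C'\to BA$ be the coalgebra map corresponding to $f$ under the \cite{HMS} correspondence, and let $\cup\colon BA\otimes BA\to BA$ be the multiplication making $BA$ a Hopf algebra (Gerstenhaber--Voronov). Because $\cup$ is a map of coalgebras, the composite $\psi = \cup\circ(\phi\otimes\phi)\colon C'\otimes C'\to BA$ is again a map of coalgebras, and hence corresponds to a twisting morphism $g\colon C'\otimes C'\to A$, namely $g = \mathrm{proj}_{BA\to A}\circ\psi$. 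The computation carried out just before Definition \ref{hopftwist} (``going around the top'') shows precisely that $g(c_1\otimes c_2) = \sum(-1)^\alpha f(c_1)\{f(\ol c_2^{(1)}),\dots,f(\ol c_2^{(k)})\}$, so that $g(x\otimes y) = \Gamma$. Since $g$ is a genuine twisting morphism, its defining equation holds at $x\otimes y$: $d\Gamma + g(d(x\otimes y)) = m\circ(g\otimes g)\circ\ol\Delta_{C'\otimes C'}(x\otimes y)$. All of the content now lies in rewriting the two outer terms in terms of $f$.

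For the left-hand term I would expand $d(x\otimes y) = dx\otimes y + (-1)^{|x|}x\otimes dy$ and evaluate $g$ on each summand. Here $g(dx\otimes y)$ is exactly the Hopf-formula right-hand side for the product $(dx)y$, and by hypothesis $f$ satisfies the Hopf formula on elements of the form $d(c_1)c_2$; thus $g(dx\otimes y) = f((dx)y)$, and similarly $g(x\otimes dy) = f(x\,dy)$ using the hypothesis on $c_1 d(c_2)$. (The closure assumptions on $C'$ are exactly what guarantee $(dx)y, x\,dy\in C'$, so that $f$ may be applied.) Combining via the Leibniz rule gives $g(d(x\otimes y)) = f(d(xy))$, which is the term we want.

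For the right-hand term I would use that the multiplication $\mu\colon C'\otimes C'\to C$ is a map of coalgebras (the bialgebra axiom for $C$), so that $\ol\Delta_C(xy) = (\mu\otimes\mu)\,\ol\Delta_{C'\otimes C'}(x\otimes y)$. Consequently $m\circ(f\otimes f)\circ\ol\Delta_C(xy) = m\circ((f\mu)\otimes(f\mu))\circ\ol\Delta_{C'\otimes C'}(x\otimes y)$, and the claim reduces to showing $g(a\otimes b) = f(ab)$ on each tensor factor $a\otimes b$ occurring in $\ol\Delta_{C'\otimes C'}(x\otimes y)$. Expanding this reduced diagonal, the factors are the matched products $x^{(i)}y^{(i)}$; after discarding the (co)unit contributions (for which $g(x\otimes 1) = f(x)$ by unitality of $\cup$), the remaining products are the reduced products $\ol x^{(i)}\ol y^{(i)}$ together with the mixed products pairing a full factor with a reduced one. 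On each of these, $g(a\otimes b) = \sum(-1)^\alpha f(a)\{\dots\} = f(ab)$ by the Hopf hypotheses, and substituting term by term finishes the identity.

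The main obstacle is the bookkeeping in this last step: one must enumerate the summands of $\ol\Delta_{C'\otimes C'}(x\otimes y)$, confirm that every product that occurs lies in $C'$ and is covered by the closure and Hopf hypotheses, and track the Koszul signs $\alpha$ through both the identification $\Gamma = g(x\otimes y)$ and the coalgebra-map relation $\ol\Delta_C(xy) = (\mu\otimes\mu)\ol\Delta_{C'\otimes C'}(x\otimes y)$. If one instead prefers a fully direct computation, expanding $d\Gamma$ via Identity \ref{DIFF} and substituting the twisting-morphism relation for $f$ yields the same match, but there the crux is a telescoping cancellation between the ``merge adjacent inputs'' terms of the brace differential and the terms in which the differential of an interior $f(\ol y^{(i)})$ refines the iterated diagonal by one step; the conceptual route above packages exactly this cancellation into the single statement that $g$ is a twisting morphism.
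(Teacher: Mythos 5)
Your proposal is correct and is essentially the paper's own proof: the twisting morphism you call $g=\mathrm{proj}\circ\cup\circ(\phi\otimes\phi)$ is exactly the map $U\colon C'\otimes C'\to BA\otimes BA\to BA\to A$ used in the paper, the identification $\Gamma=g(x\otimes y)$ via the ``going around the top'' computation is the same, and the paper likewise concludes by noting that the hypotheses force $U$ to agree with $f\circ(\text{multiplication})$ on the image of $d$ and on the tensor factors of the reduced diagonal (its submodule $D$), then invokes the twisting-morphism equation for $U$. Your write-up merely unpacks the bookkeeping that the paper compresses into ``on elements of $D$, $U=L$.''
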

\begin{proof}The following composition induced by $f$
\[U: C'\otimes C' \to BA\otimes BA \to BA \to A, \] is a twisting morphism.
Let $D \subset C'\otimes C'$ be the submodule generated by the images of $d$ and $\ol \Delta$. Then, $D$ is a subcoalgebra, and the product
defines a map $p: D \to C'$. Now, we can use this map together with $f$ to induce a twisting morphism
\[L: D \to C' \to A.\] We see by assumption that on elements of $D$, $U = L$. Thus, we have the following equations:
\[U(d(xy)) = f(d(xy)),\]
\[m\circ (U\otimes U) \circ \ol \Delta (x\otimes y) = m\circ (f\otimes f)\circ \ol \Delta(xy),\]
and
\[U(x\otimes y) = \Gamma. \]
The conclusion follows from the fact that $U$ is a twisting morphism.
\end{proof}

\begin{cor}\label{hopftwistdef}Let $C$ be a connected nonnegatively graded Hopf algebra and let $A$ be an $\mathcal S_2$ algebra. Let $c_1, c_2 \in \ol C$. Let $f: C\to A$ be defined on all elements $c$ satisfying $|c| < |c_1| + |c_2|$ so that the twisting morphism formula $df(c) + fd(c) = m(f\otimes f)\ol\Delta(c)$ holds, and if $x_1, x_2\in \ol C$ with $|x_1| + |x_2| < |c_1| + |c_2|$, then \[f(x_1x_2)=\sum (-1)^\alpha f(x_1)\{f(\ol x_2^{(1)}),\ldots, f(\ol x_2^{(k)})\}. \] If we set \[f(c_1 c_2) = \sum (-1)^\alpha f(c_1)\{f(\ol c_2^{(1)}),\ldots, f(\ol c_2^{(k)})\},\] then the twisting morphism formula $df(c_1 c_2) + f d(c_1 c_2) = m(f\otimes f)\ol\Delta(c_1 c_2)$ holds.
\end{cor}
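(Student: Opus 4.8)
The plan is to read this corollary as the inductive step in building a Hopf twisting morphism one degree at a time, and to deduce it directly from Theorem \ref{techtwist}. Write $n = |c_1| + |c_2|$ and apply that theorem with $x = c_1$ and $y = c_2$. The expression $\Gamma$ of the theorem is then exactly the value we have assigned to $f(c_1 c_2)$, so $d f(c_1 c_2) = d\Gamma$, and the conclusion $d\Gamma + f(d(c_1 c_2)) = m \circ (f \otimes f) \circ \ol\Delta(c_1 c_2)$ of Theorem \ref{techtwist} is verbatim the twisting morphism formula we must establish. Thus the entire content of the corollary reduces to checking that the partially-defined $f$ satisfies the hypotheses of Theorem \ref{techtwist}.

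First I would pin down the degrees of every element the theorem feeds into $f$. Because $C$ is connected and nonnegatively graded, the reduced diagonal lands in $\ol C \otimes \ol C$ with both factors in strictly positive degree, so each component satisfies $|\ol c_i^{(j)}| < |c_i| \le n-1$. Consequently every element appearing in the hypotheses has degree strictly less than $n$: the boundary products $d(c_1)c_2$ and $c_1 d(c_2)$ (degree $n-1$), the diagonal products $\ol c_1^{(1)}\ol c_2^{(1)}$, $\ol c_1^{(2)}\ol c_2^{(2)}$, $\ol c_1^{(1)}\ol c_2^{(2)}$ (degree at most $n-2$), the boundary $d(c_1 c_2)$ (degree $n-1$), and all components of $\ol\Delta(c_1 c_2)$. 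Hence $f$ is already defined on all of them. By the first hypothesis of the corollary $f$ is a twisting morphism throughout this range, and by the second hypothesis $f$ obeys the Hopf formula on every product $x_1 x_2$ with $x_1, x_2 \in \ol C$ and $|x_1| + |x_2| < n$; since each of the five distinguished elements is of exactly this form, these are precisely the conditions Theorem \ref{techtwist} demands of $f$.

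Finally I would produce the subcoalgebra $C'$ required by the theorem, taking it to be the subcoalgebra of $C$ generated by $c_1$, $c_2$, and the finitely many distinguished products above; by conilpotency and connectivity it is finite-dimensional and, crucially, lies in degrees $< n$, so $f$ restricts to it as a twisting morphism satisfying the Hopf formula on the required products. The one point demanding care, and which I expect to be the main obstacle, is the closure condition of Theorem \ref{techtwist}: taken literally as a condition on all pairs in $C'$, closure under $(a,b) \mapsto d(a)b$ would generate elements of degree as large as roughly $2n$, escaping the range where $f$ is defined. I would resolve this by tracing the construction in the proof of Theorem \ref{techtwist} — the submodule $D$ built from $d(c_1 \otimes c_2)$ and $\ol\Delta(c_1 \otimes c_2)$ together with the multiplication map $p \colon D \to C'$ — to confirm that $p$ lands only among the low-degree products formed from $c_1$, $c_2$, and their reduced diagonal components, so that no further enlargement of $C'$ is forced. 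Once this is verified, Theorem \ref{techtwist} applies and the corollary follows.
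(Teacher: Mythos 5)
Your proposal is correct and coincides with the paper's own proof, which reads in its entirety: ``Let $C'\subset C$ be the subcoalgebra given by elements in degrees less than $|c_1| + |c_2|$, and apply Theorem \ref{techtwist}'' --- i.e.\ the same reduction you make, with $x=c_1$, $y=c_2$, so that $\Gamma$ is the assigned value of $f(c_1c_2)$ and the theorem's conclusion is the desired twisting morphism formula. The closure difficulty you isolate is genuine, but it afflicts the paper's choice $C' = C_{<n}$ just as much as your generated subcoalgebra (for $a,b \in C_{<n}$ of large total degree, $d(a)b$ has degree up to $2n-3$ and escapes $C_{<n}$); your resolution --- that the proof of Theorem \ref{techtwist} only ever evaluates $f$, and invokes the Hopf formula, on $d(c_1)c_2$, $c_1 d(c_2)$, and products drawn from $c_1$, $c_2$, and the components of their reduced diagonals, all of which lie in degrees $< n$ where the corollary's hypotheses apply --- is the correct reading of both the theorem and the corollary, and is more careful than what the paper writes.
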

\begin{proof} Let $C'\subset C$ be the subcoalgebra given by elements in degrees less than $|c_1| + |c_2|$, and apply Theorem \ref{techtwist}. \end{proof}
As a direct corollary of this we have the following useful result.
\begin{cor}Let $C = TV$ be a Hopf algebra whose algebra structure is free on the generators $V$, where $V$ is non-negatively graded and $V_k = 0$ for $k\ge n+1$. Let $C'= TV_{\le n-1} \subset C$ be the sub-Hopf algebra given by the generators in degrees $\le n-1$. Assume that $C'\to A$ is a Hopf twisting morphism, and there is an extension to \[V \oplus_{V_{\le n-1}} TV_{\le n-1} \to A\] that is a twisting morphism. Then, the unique extension $C \to A$ given by the formula of Lemma \ref{hopftwistdef} is a Hopf twisting morphism.
\end{cor}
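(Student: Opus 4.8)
The plan is to use Corollary \ref{hopftwistdef} as an engine: it takes a twisting morphism defined below a given degree, together with the validity of the Hopf formula for products below that degree, and produces the twisting-morphism identity at a new product. Because $C = TV$ is free as an algebra, every element of $\overline C$ that is not a generator is a sum of products $c_1 c_2$ with $c_1, c_2 \in \overline C$, so the data "$f$ on the generators $V$, together with $f(c_1 c_2)$ defined by the Hopf formula on products" determines a unique linear map $f\colon C \to A$, provided the recursion is well-founded and consistent. I would organize the construction and all verifications as a single induction on degree, exactly as in Corollary \ref{hopftwistdef}. Inheriting the connectivity hypothesis of that corollary makes the recursion well-founded: in the defining formula $f(c_1 c_2) = \sum (-1)^\alpha f(c_1)\{f(\ol c_2^{(1)}), \ldots, f(\ol c_2^{(k)})\}$ the factor $c_1$ and each reduced-coproduct component $\ol c_2^{(i)}$ have strictly smaller degree than $c_1 c_2$, so the right-hand side only involves values of $f$ already known.

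The base of the induction consists of the generators $V$. On $V_{\le n-1}$ the value of $f$ is the restriction of the given Hopf twisting morphism $C' \to A$, and on the top generators $V_n$ it is the given twisting-morphism extension $V \oplus_{V_{\le n-1}} TV_{\le n-1} \to A$. In either case the twisting-morphism identity holds at a generator $v$: its reduced coproduct has components of degree strictly below $\deg v \le n$, which therefore lie in $TV_{\le n-1} = C'$, where $f$ is already a Hopf twisting morphism, so $m(f\otimes f)\ol\Delta(v)$ is computed from known, compatible values. For the inductive step, suppose $f$ is a twisting morphism in all degrees below $d = |c_1 c_2|$ and that the Hopf formula holds for all products of total degree below $d$. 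These are precisely the hypotheses of Corollary \ref{hopftwistdef}, whose conclusion is the twisting-morphism identity at $c_1 c_2$, which is where the recursion has placed our definition of $f$. As in the proof of that corollary, the degree truncation is a subcoalgebra closed under the operations demanded by Theorem \ref{techtwist}.

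The subtle point, and the step I expect to be the main obstacle, is consistency: Corollary \ref{hopftwistdef} requires the Hopf formula for \emph{every} lower-degree factorization, whereas the recursion commits to a single factorization when it defines $f(c_1 c_2)$. I therefore must show that the constructed $f$ satisfies the Hopf twisting-morphism identity of Definition \ref{hopftwist} for all pairs in $\overline C$, not merely the chosen factors, and hence that the value on a decomposable element does not depend on how it is split. This is the multiplicativity computation sketched before Definition \ref{hopftwist}, and it rests on the brace associativity of Identity \ref{assoc} together with the coassociativity of $\Delta$: comparing the expansions of $f((xy)z)$ and $f(x(yz))$ term by term is a brace reassociation, with signs dictated by Identities \ref{DIFF} and \ref{mult}. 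I would fold this into the same degree induction, establishing "twisting morphism up to degree $d$" and "Hopf formula for all factorizations up to degree $d$" simultaneously, so that each is available to justify the next degree of the other. The bulk of the effort is the sign bookkeeping in this associativity comparison; everything else is formal.

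Once both statements hold in all degrees, $f$ is by Definition \ref{hopftwist} a Hopf twisting morphism, and it is the unique extension prescribed by the formula, since the formula forces every value on a decomposable element from the values on generators. This completes the proof.
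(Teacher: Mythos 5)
Your proposal is correct and takes essentially the same route the paper intends: the paper presents this result as a direct consequence of Corollary \ref{hopftwistdef} applied inductively on degree, with the well-definedness/uniqueness of the extension across different factorizations attributed to Identity \ref{assoc}, term comparison, and sign calculations --- exactly the two ingredients you isolate. Your write-up simply makes explicit the degree induction (including the connectivity needed for well-foundedness and the base case on generators) and the factorization-consistency argument that the paper leaves implicit.
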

Observe that the extension is unique by Identity \ref{assoc}, comparing terms, and sign calculations.
The importance of this corollary is that it allows one to construct Hopf twisting morphisms with source a free Hopf algebra (of the form $C = TV$ as an algebra) inductively on the generators $V$. For example, the Hopf algebra could be of the form $\Omega S_*(X)$, this example will be used in a future paper.

In a different direction, we have the following variation.

\begin{cor}\label{freehopf} Let $V$ be a free graded $R$-module with the zero differential, and define $C = TV$ to be the Hopf algebra with $V$ as its primitive elements. Let $A$ be an $\mathcal S_2$ algebra. Then, any degree $-1$ $R$-linear map $f: V \to A$ extends uniquely to a Hopf twisting morphism $f: TV \to A$.
\end{cor}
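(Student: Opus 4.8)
My plan is to avoid explicit brace computations and instead exploit the bijection, recorded in the Proposition above, between Hopf twisting morphisms $C \to A$ and maps of Hopf algebras $C \to BA$. Under this bijection it suffices to prove that every degree $-1$ map $f \colon V \to \ol A$ extends uniquely to a map of Hopf algebras $\phi \colon TV \to BA$ with $\phi(v) = [f(v)]$, and that this accounts for all such maps. So the whole problem is to classify the maps of Hopf algebras out of the free object $TV$.

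The key point is that $TV$ is free as an algebra on $V$. Hence a map of differential graded algebras $\phi \colon TV \to BA$ is the same datum as a degree $0$ chain map $\psi \colon V \to BA$, namely its restriction $\psi = \phi|_V$. Since both $TV$ and $BA$ are Hopf algebras, the two composites $\Delta_{BA} \circ \phi$ and $(\phi \otimes \phi) \circ \Delta_{TV}$ are maps of algebras $TV \to BA \otimes BA$, so they coincide as soon as they agree on the algebra generators $V$; as $\Delta_{TV}(v) = v \otimes 1 + 1 \otimes v$, this says precisely that $\phi$ is a map of Hopf algebras if and only if each $\psi(v)$ is primitive in $BA$. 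The primitives of the tensor coalgebra $BA = T(\Sigma \ol A)$ are exactly the length-one tensors $\Sigma \ol A$, so maps of Hopf algebras $TV \to BA$ are in natural bijection with degree $0$ chain maps $V \to \Sigma \ol A$. Desuspending identifies such a chain map with a degree $-1$ map $f \colon V \to \ol A$, and feeding this back through the Proposition yields the asserted unique Hopf twisting morphism extending $f$.

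The one point that needs care is the differential, and I expect it to be the only real subtlety rather than a serious obstacle. Because $V$ is primitive with zero differential, the twisting morphism equation evaluated on a generator $v$ degenerates to $d_A f(v) = m(f \otimes f) \ol\Delta(v) = 0$; equivalently, using $d([a]) = -[da]$ on length-one bars, the map $\psi$ is a chain map exactly when $f$ lands in cycles. Thus the extension exists and is unique once this (necessary) condition is imposed, and I would make it explicit in the statement. For comparison, the more hands-on route favored elsewhere in this section --- defining the extension by induction on word length through the formula of Definition \ref{hopftwist} applied to the splitting $v_1 \cdots v_n = v_1 \cdot (v_2 \cdots v_n)$, checking the twisting morphism equation stage by stage via Theorem \ref{techtwist} (its hypotheses being easy to arrange here because $d_{TV} = 0$ and $V$ is primitive), and then confirming the Hopf formula for \emph{every} splitting --- has its main obstacle in that last consistency check: already comparing the two splittings of $v_1 v_2 v_3$ forces the brace associativity relation $x\{x_1\}\{y_1\} = \pm\, x\{x_1, y_1\} \pm x\{x_1\{y_1\}\} \pm x\{y_1, x_1\}$, which is the $m = n = 1$ case of Identity \ref{assoc}, and the general case reduces to a sign bookkeeping governed by \ref{assoc} together with its compatibility with Identities \ref{DIFF} and \ref{mult}.
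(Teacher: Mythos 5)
Your argument is correct, but it takes a genuinely different route from the paper's. The paper stays entirely on the cobar side: the formula of Definition \ref{hopftwist} forces the value of any Hopf twisting morphism on a word of length $n$ from its values on shorter words, and Theorem \ref{techtwist} is applied inductively on word length to show that this forced extension still satisfies the twisting morphism equation; the consistency of the different splittings of a word is exactly where Identity \ref{assoc} enters, as you anticipate in your closing paragraph. You instead transport the whole problem across the bijection with Hopf algebra maps $TV\to BA$ (the proposition following Definition \ref{hopftwist}) and classify those maps by universal properties alone: freeness of $TV$ as an algebra reduces everything to generators, primitivity of $V$ reduces the coalgebra-map condition to primitivity of the images (both $\Delta_{BA}\circ\phi$ and $(\phi\otimes\phi)\circ\Delta_{TV}$ being algebra maps), and the primitives of the tensor coalgebra $BA=T(\Sigma\ol A)$ are precisely the length-one bars. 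What this buys is a proof with no brace identities and no splitting-consistency check --- both are absorbed into the statement that algebra maps out of a free algebra are determined by their values on generators --- at essentially no cost, since the bijection you invoke is already established in the paper. Finally, your remark about the differential identifies a genuine (if minor) erratum in the statement of Corollary \ref{freehopf}: the twisting morphism equation on a primitive generator $v$ with $dv=0$ forces $d_A f(v)=0$, so $f$ must take values in cycles of $A$; the paper's own inductive proof needs the same hypothesis in its base case, and its one application (Lemma \ref{freesub}, where $f$ is the canonical degree $-1$ map $V\to\mathbb S_2(\Sigma^{-1}V)$, whose image consists of cycles) satisfies it, so nothing downstream is affected.
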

\begin{proof} By induction on length, Theorem \ref{techtwist} implies that the unique extension $f: TV \to A$ is still a twisting morphism.
\end{proof}

If $A = \Omega C$ then $f(c) = [c]$, and by definition above, we have
\begin{align*}\sum (-1)^\alpha [c_1]\{[\ol c_2^{(1)}],\ldots, [\ol c_2^{(k)}]\} &= (-1)^{|c_1| -1}[c_1]\{[c_2]\}\\ &= [c_1c_2]
\end{align*}
and so the universal twisting morphism $C\to \Omega C$ is a Hopf twisting morphism and thus induces a map of Hopf algebras $C\to B\Omega C$.

We now have the following generalization of the main result of \cite{tKad} to any ground ring $R$.
\begin{thm}\label{unit}The cobar construction induces a functor $\Omega: \HopfAlg \to \SAlg{2}$.
Furthermore, the natural weak equivalence $C \to B\Omega C$ is a map of Hopf algebras.
\end{thm}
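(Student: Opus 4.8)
The plan is to assemble the constructions from the preceding part of this section, which already carry out the substantive work, and then verify the two remaining points: that $\Omega$ acts on morphisms so as to give a functor, and that the natural map $C\to B\Omega C$ is simultaneously a map of Hopf algebras and a weak equivalence.

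For functoriality I would argue as follows. The classical cobar construction is already a functor $\Omega\colon\CoAlg\to\Alg$, so a map of Hopf algebras $g\colon C\to C'$ induces a map of algebras $\Omega g\colon\Omega C\to\Omega C'$; it remains only to check that $\Omega g$ preserves the brace operations, so that it is a map of $\mathcal S_2$ algebras. The first brace $[x]\{[y_1|\cdots|y_q]\}=(-1)^\alpha[x^{(1)}y_1|\cdots|x^{(q)}y_q]$ is built solely from the iterated diagonal and the multiplication of $C$, both of which $g$ preserves as a map of Hopf algebras; hence $\Omega g$ commutes with the first brace. The higher braces were defined inductively from the first brace by imposing Identity \ref{mult} together with the initial condition $[x]\{\beta_1,\ldots,\beta_p\}=0$ for $p>1$, and since $\Omega g$ is a map of algebras it respects this inductive definition. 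Therefore $\Omega g$ preserves all braces, and $\Omega\colon\HopfAlg\to\SAlg{2}$ is a functor.

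For the natural map $C\to B\Omega C$ I would invoke the computation immediately preceding the statement, which shows that the universal twisting morphism $C\to\Omega C$, namely $c\mapsto[c]$, satisfies the Hopf twisting morphism formula of Definition \ref{hopftwist}; by the Proposition identifying Hopf twisting morphisms $C\to A$ with maps of Hopf algebras $C\to BA$, this produces the desired map of Hopf algebras $C\to B\Omega C$. Naturality in $C$ is inherited from naturality of the unit of the classical $\Omega\dashv B$ adjunction on underlying coalgebras. That the map is a weak equivalence is the classical statement, established in \cite{HMS}, that the unit of the cobar--bar adjunction is a homology isomorphism; since the $\mathcal S_2$ and Hopf enhancements do not alter the underlying chain map, the homology equivalence carries over unchanged.

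The only point demanding genuine care—and the one I expect to be the main obstacle—is confirming that the inductive extension of the braces is genuinely \emph{natural}, i.e.\ that every brace $[x_1|\cdots|x_n]\{\beta_1,\ldots,\beta_q\}$ is expressed entirely through iterated diagonals, products, and the universal first-brace formula, with no noncanonical choices entering along the way. Once this is checked, naturality under maps of Hopf algebras is automatic and the theorem follows.
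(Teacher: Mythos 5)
Your proposal is correct and takes essentially the same route as the paper: there the theorem is stated as a summary of the section's work, with functoriality asserted as clear from the canonical nature of the brace formulas, and the Hopf algebra property of $C \to B\Omega C$ deduced, exactly as you do, from the computation that the universal twisting morphism $C \to \Omega C$ satisfies the Hopf twisting formula together with the bijection between Hopf twisting morphisms $C\to A$ and Hopf algebra maps $C \to BA$, the weak equivalence being the classical statement from \cite{HMS}. Your extra detail on functoriality (that the first brace is built from the diagonal and product of $C$, and the higher braces from the zero initial condition and Identity \ref{mult}, with no noncanonical choices) merely makes explicit what the paper leaves as ``we clearly have a functor.''
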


In \cite{hess}, Hess--Parent--Scott--Tonks study $\Omega S_*(K)$, where $K$ is a $1$-reduced simplicial set, as a coalgebra, and its relationship to $S_*(GK)$, where $GK$ is the Kan loop group on $K$. Note that the product $GK \times GK \to GK$ induces a Hopf algebra structure on $S_*(GK)$, thus, $\Omega S_*(GK)$ is an $\mathcal S_2$ algebra. By unwinding the definitions used in \cite{hess} and interpreting them in our language, the authors construct a natural Hopf twisting morphism $\Omega S_*(K) \to \Omega S_*(GK)$ where $K$ is a $1$-reduced simplicial set, and $GK$ is the Kan loop group,
which induces a weak equivalence $\Omega S_*(K) \to S_*(GK)$. This directly implies that there is a weak equivalence
of Hopf algebras $\Omega S_*(K) \to B\Omega S_*(GK)$. Thus, by interpreting Theorem 4.4 of \cite{hess} in our context, and applying Theorem \ref{unit}, we have the following.
\begin{cor}[Hess--Parent--Scott--Tonks]\label{hessthm}If $K$ is a $1$-reduced simplicial set, then there is a natural equivalence of Hopf Algebras $\Omega S_*(K)\to B\Omega S_*(GK)$. Thus, $\Omega S_*(K)$ and $S_*(GK)$ are naturally equivalent as Hopf algebras.
\end{cor}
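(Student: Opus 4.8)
The plan is to realize the asserted map as the one classified by a Hopf twisting morphism read off from \cite{hess}, and then to identify it with a composite of the classical Adams--Szczarba equivalence and the bar--cobar unit of Theorem \ref{unit}. First I would fix the players. Since $GK$ is a simplicial group, $S_*(GK)$ is a Hopf algebra, so $\Omega S_*(GK)$ is an $\mathcal S_2$ algebra by Theorem \ref{unit} and $B\Omega S_*(GK)$ is a Hopf algebra. Since $K$ is $1$-reduced, $S_*(K)$ is a conilpotent coalgebra and $\Omega S_*(K)$ carries the natural coproduct (dual to the bar construction) making it the Hopf algebra studied in \cite{hess}. By the Proposition following Definition \ref{hopftwist}, a natural Hopf algebra map $\Phi\colon \Omega S_*(K) \to B\Omega S_*(GK)$ is the same datum as a natural Hopf twisting morphism $T\colon \Omega S_*(K) \to \Omega S_*(GK)$, so I would extract $T$ by reinterpreting Szczarba's explicit twisting cochain from \cite{hess} as a degree $-1$ map into the cobar.

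The step I expect to be the main obstacle is verifying that this $T$ really is a Hopf twisting morphism in the sense of Definition \ref{hopftwist}. Beyond the ordinary twisting-morphism equation one must check the multiplicative identity relating $T(c_1c_2)$ to the braces $T(c_1)\{T(\ol c_2^{(1)}),\ldots\}$ on $\Omega S_*(GK)$, where those braces are the ones produced in Section \ref{COBAR} from the Hopf structure of $S_*(GK)$. This amounts to a sign- and bookkeeping-heavy comparison of the formulas of \cite{hess} with Identities \ref{assoc}, \ref{DIFF}, and \ref{mult} and with the $\mathcal S_2$ conventions of Section \ref{conv}. Because $\Omega S_*(K)$ is free as an algebra on $\Sigma^{-1}\ol{S_*(K)}$, Theorem \ref{techtwist} together with Corollary \ref{hopftwistdef} let me reduce this verification to the generators and run it by induction on degree.

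Finally I would show $\Phi$ is a weak equivalence. The classical content of \cite{hess} (Theorem 4.4, the \emph{enriched Adams--Szczarba theorem}) is that the twisting cochain underlying $T$ induces a weak equivalence of chain algebras $\psi\colon \Omega S_*(K) \xrightarrow{\sim} S_*(GK)$, which in the enriched form is moreover a map of coalgebras, hence of Hopf algebras. Applying Theorem \ref{unit} to the Hopf algebra $S_*(GK)$ shows that the unit $u\colon S_*(GK) \xrightarrow{\sim} B\Omega S_*(GK)$ is a weak equivalence of Hopf algebras. Using the bijection of the Proposition together with the triangle identity expressing that the universal twisting morphism $S_*(GK) \to \Omega S_*(GK)$ factors the unit $u$ through the projection $B\Omega S_*(GK) \to \Omega S_*(GK)$, I would check that $\Phi$ and $u\circ\psi$ are classified by the same twisting morphism and hence coincide; then $\Phi$ is a weak equivalence as a composite of two weak equivalences, which proves the first assertion. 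Chaining the two Hopf algebra weak equivalences $\Omega S_*(K) \xrightarrow{\ \Phi\ } B\Omega S_*(GK) \xleftarrow{\ u\ } S_*(GK)$ then yields the natural equivalence of Hopf algebras in the second assertion.
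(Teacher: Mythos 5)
Your overall architecture matches the paper's: extract a natural Hopf twisting morphism $T\colon \Omega S_*(K)\to \Omega S_*(GK)$ from \cite{hess}, use the bijection following Definition \ref{hopftwist} to get a Hopf algebra map $\Phi\colon \Omega S_*(K)\to B\Omega S_*(GK)$, and combine with the Hopf equivalence $u\colon S_*(GK)\to B\Omega S_*(GK)$ of Theorem \ref{unit} to produce the zig-zag. The gap is in your last step. You assert that the Szczarba equivalence $\psi\colon \Omega S_*(K)\to S_*(GK)$ ``in the enriched form is moreover a map of coalgebras, hence of Hopf algebras,'' and you then identify $\Phi$ with $u\circ\psi$. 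Both claims are false, and the second depends on the first. The whole point of the enrichment in \cite{hess} is that $\psi$ is \emph{not} strictly comultiplicative: it is only comultiplicative up to a coherent family of higher homotopies (a DCSH map), and those higher homotopies are exactly the components of $T$ landing in word length $\ge 2$ of $\Omega S_*(GK)$. If $\psi$ were a strict coalgebra map, the corollary would be vacuous as stated --- one would quote $\psi$ itself as the natural Hopf equivalence and never mention $B\Omega S_*(GK)$. Concretely, $u\circ\psi$ is then not even a map of coalgebras, so it cannot coincide with the coalgebra map $\Phi$; and at the level of classifying data, $\Phi$ corresponds to $T$ while your candidate would correspond to $\iota\circ\psi$ (where $\iota\colon S_*(GK)\to \Omega S_*(GK)$ is the universal twisting morphism), and these differ precisely by the nontrivial higher terms of $T$.

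What is needed instead is an argument that $\Phi$ is a quasi-isomorphism which does not route through $u\circ\psi$. This is exactly what the paper asserts: the Hopf twisting morphism \emph{induces} the weak equivalence $\psi$, and this ``directly implies'' that $\Phi$ is a weak equivalence of Hopf algebras. A clean way to fill in that implication: under the classical bar--cobar adjunction of \cite{HMS}, $T$ corresponds to an algebra map $F\colon \Omega\Omega S_*(K)\to \Omega S_*(GK)$, and $\Phi$ factors as $BF\circ u'$, where $u'\colon \Omega S_*(K)\to B\Omega\Omega S_*(K)$ is the classical unit. The unit $u'$ is a weak equivalence; $F$ is a weak equivalence by a word-length filtration (convergent since $K$ is $1$-reduced and $GK$ is reduced) whose associated graded is built from the quasi-isomorphism $\psi$; and $B$ preserves weak equivalences, again by the word-length filtration. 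Hence $\Phi$ is a weak equivalence. With that replacement, your remaining steps (Theorem \ref{unit} applied to $S_*(GK)$ and the two-map zig-zag) agree with the paper and complete the proof.
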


Back to generalities, it would be nice to complete the duality picture by showing that the natural map of algebras $g:\Omega B A \to A$
is a map of $\mathcal S_2$ algebras if $A$ is an $\mathcal S_2$ algebra. However, this is not the case in general as can
be seen by considering the higher braces. For example, $[[x]]\{[[y]],[[z]]\} = 0$ in $\Omega B A$ but $g([[a]]) = a$,
and in general $x\{y,z\}$ need not be zero in $A$. Essentially, the functor $\Omega$ is not free enough as an $\mathcal S_2$ algebra. We will address this problem in the following section.

\section{The $\mathcal S_2$ Cobar Construction}\label{tildecobar}
In this section, we freely use both the sequence notation for elements of $\mathcal S_2$ introduced above and the brace notation that we have primarily used so far, for example $(1,2,1)(x,y) = x\{y\}$.

As is clear by now, $\Omega: \HopfAlg \to \SAlg{2}$ is not the left adjoint to $B: \SAlg{2} \to \HopfAlg$; it is simply not free enough as an $\mathcal S_2$ algebra to have the appropriate universal property. We now consider the true left adjoint $\widetilde \Omega : \HopfAlg \to \SAlg{2}$. Our definition of Hopf twisting morphisms \ref{hopftwistdef} makes the construction clear. First, given any Hopf algebra $C$, we can define a differential
$\partial_\Delta$ on $\mathbb S_2 (\Sigma^{-1} \ol C)$ (the free $\mathcal S_2$ algebra on $\Sigma^{-1}\ol C$ defined in Section \ref{conv}) by $\partial_\Delta ([c]) = (-1)^{|\ol c^{(1)}|}[\ol c^{(1)} | \ol c^{(2)}]$ and extending by
the Leibniz rule. It is easy to see that this differential commutes with the natural differential $d$, and thus we have a differential
$\partial = d + \partial_\Delta$.
Define $\widetilde \Omega (C)$ to be the quotient of $(\mathbb S_2(\Sigma^{-1} \ol C), \partial)$ by the ideal $I$ generated by
\[[c_1 c_2] - \sum (-1)^\alpha [c_1]\{[\ol c_2^{(1)}],\ldots, [\ol c_2^{(k)}]\} \]
for all $c_1, c_2 \in \ol C$ and
where $\alpha = k(|c_1|-1) + \sum_{i=1}^k (k-i)(|\ol c_2^{(i)}| - 1)$.

\begin{lem}$\partial I \subset I$
\end{lem}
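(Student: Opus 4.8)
We must show that the ideal $I \subset \mathbb S_2(\Sigma^{-1}\ol C)$ generated by the elements
\[
r(c_1, c_2) = [c_1 c_2] - \sum (-1)^\alpha [c_1]\{[\ol c_2^{(1)}],\ldots, [\ol c_2^{(k)}]\}
\]
is closed under $\partial = d + \partial_\Delta$. The plan is to reduce the problem to a single computation: since $I$ is by definition the $\mathcal S_2$-ideal generated by the relators $r(c_1,c_2)$, and since $\partial$ is a derivation with respect to the $\mathcal S_2$-algebra structure (it satisfies the Leibniz rule and commutes with the brace operations up to the differential Identity \ref{DIFF}), it suffices to check that $\partial\, r(c_1,c_2) \in I$ for each generator $r(c_1,c_2)$. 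Indeed, once the generators of $I$ are sent into $I$ by $\partial$, the derivation property propagates this to arbitrary $\mathcal S_2$-algebra expressions built from the relators, since $p(\ldots, r, \ldots)$ differentiates to a sum of terms each still containing a factor lying in $I$, plus a term $p(\ldots, \partial r, \ldots)$ with $\partial r \in I$.

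The key computation is therefore $\partial\, r(c_1, c_2)$. Here I would exploit the main technical result already available, Theorem \ref{techtwist}. Taking $C' = C$ itself (or any subcoalgebra satisfying the stated closure properties, which the whole of $C$ does trivially), consider the universal twisting morphism $f\colon C \to \widetilde\Omega(C)$, $f(c) = [c]$, landing in the quotient $\widetilde\Omega(C) = \mathbb S_2(\Sigma^{-1}\ol C)/I$. The expression $\Gamma$ appearing in Theorem \ref{techtwist} is precisely the brace sum in $r(c_1,c_2)$, and the theorem yields
\[
d\Gamma + f(d(c_1 c_2)) = m \circ (f\otimes f)\circ \ol\Delta(c_1 c_2).
\]
On the other hand, the definition of the cobar differential gives $\partial[c_1 c_2] = -[d(c_1 c_2)] + m\circ(f\otimes f)\circ\ol\Delta(c_1c_2)$ (the twisting-morphism formula for $f(c)=[c]$), so subtracting these two identities shows that $\partial$ applied to the relator reproduces, up to the relators themselves, the difference of the two sides—i.e., $\partial\, r(c_1,c_2)$ is a combination of relators $r(\cdot,\cdot)$ evaluated on the lower-complexity inputs $d(c_1)c_2$, $c_1 d(c_2)$, and the mixed products $\ol c_1^{(i)}\ol c_2^{(j)}$, all of which lie in $I$ by definition.

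**Main obstacle.**

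The genuine difficulty is bookkeeping rather than conceptual: one must verify that when $\partial = d + \partial_\Delta$ is applied to the brace sum $\sum(-1)^\alpha[c_1]\{[\ol c_2^{(1)}],\ldots\}$, the signs $\alpha$ produced by Identities \ref{DIFF} and \ref{mult} match exactly the signs $\alpha$ appearing in the relators for the derived inputs, so that the output is a $\Z$-linear combination of honest generators of $I$ and not merely something congruent to such a combination. I expect the cleanest route is to avoid expanding the braces by hand and instead package the entire calculation through Theorem \ref{techtwist}, which already encodes the correct signs via the twisting-morphism formalism; the sign $\alpha$ in the statement of that theorem is identical to the one in the relator, so the coherence is built in. The residual check is then that $\partial_\Delta$ acting on $[c_1 c_2]$ and the differentials of the lower terms reassemble into relators on the elements $d(c_1)c_2,\ c_1 d(c_2),\ \ol c_1^{(1)}\ol c_2^{(1)},\ \ol c_1^{(2)}\ol c_2^{(2)},\ \ol c_1^{(1)}\ol c_2^{(2)}$ listed in Theorem \ref{techtwist}—a finite, if tedious, verification that the coalgebra closure hypotheses guarantee each such element again yields a valid relator lying in $I$.
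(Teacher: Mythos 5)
Your reduction to generators is sound and coincides with the paper's first (and only explicit) step: since $\partial$ acts on an element $p(a_1,\ldots,a_k)$ of the ideal by $(dp)(a_1,\ldots,a_k)\pm\sum_i p(a_1,\ldots,\partial a_i,\ldots,a_k)$, closure of $I$ under $\partial$ follows once $\partial\, r(c_1,c_2)\in I$ for every generator. The gap is in your key step. You propose to get $\partial\, r(c_1,c_2)\in I$ by applying Theorem \ref{techtwist} to the map $f(c)=[c]$ with target $\widetilde\Omega(C)=\mathbb S_2(\Sigma^{-1}\ol C)/I$. This is circular: Theorem \ref{techtwist} requires the target $A$ to be a differential graded $\mathcal S_2$ algebra and $f$ to be a twisting morphism, but the differential $\partial$ descends to the quotient $\mathbb S_2(\Sigma^{-1}\ol C)/I$ precisely when $\partial I\subset I$, which is the statement of the lemma. (Put differently: if you grant that $\widetilde\Omega(C)$ is a dg algebra in which the relators vanish, then $\partial r\in I$ is immediate because $\partial$ commutes with the quotient map; but that grant \emph{is} the lemma.) Nor can you sidestep this by taking $A$ to be the free algebra $(\mathbb S_2(\Sigma^{-1}\ol C),\partial)$, which genuinely is a dg $\mathcal S_2$ algebra: there the hypothesis of Theorem \ref{techtwist} fails, since $f(c)=[c]$ does not satisfy the Hopf twisting formula on any product --- the failure is exactly the nonzero relator $r(c_1,c_2)$.

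The paper's own (omitted) verification stays in the free dg algebra and expands $\partial\, g(c_1,c_2)$ by hand using Identity \ref{DIFF} and Identity \ref{mult}, checking signs. Your instinct to route the signs through the twisting-morphism formalism is salvageable, but you must transplant the \emph{proof} of Theorem \ref{techtwist}, not its statement: take $A=(\mathbb S_2(\Sigma^{-1}\ol C),\partial)$, let $F\colon C\to BA$ be the coalgebra map corresponding to the twisting morphism $f(c)=[c]$, and let $U\colon C\otimes C\to BA\otimes BA\to BA\to A$ be the composite twisting morphism, using the Gerstenhaber--Voronov Hopf structure on $BA$. As computed in Section \ref{COBAR}, $U(x\otimes y)$ is exactly the brace sum $\Gamma(x,y)$ occurring in the relator. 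The twisting-morphism identity for $U$ expresses $\partial\Gamma(x,y)$ in terms of $\Gamma(dx,y)$, $\Gamma(x,dy)$, and products of terms $\Gamma(\ol x^{(i)},\ol y^{(j)})$, $[x]$, $[\ol y^{(j)}]$; the twisting-morphism identity for $f$, together with the Hopf compatibility $\ol\Delta(xy)=\pm\,\ol{x^{(1)}y^{(1)}}\otimes\ol{x^{(2)}y^{(2)}}$ in $C$, expresses $\partial[xy]$ in terms of $[dx\,y]$, $[x\,dy]$, and products $[\ol x^{(1)}\ol y^{(1)}][\ol x^{(2)}\ol y^{(2)}]$. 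Subtracting, $\partial\, r(x,y)$ becomes a sum of multiples of relators $r(dx,y)$, $r(x,dy)$, $r(\ol x^{(i)},\ol y^{(j)})$, hence lies in $I$. With that repair your argument is correct and, once the sign matching is written out, arguably cleaner than brute-force manipulation of Identity \ref{DIFF}; as proposed, however, it assumes what is to be proved.
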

\begin{proof}It suffices to show that for a generator \[g(c_1,c_2) = [c_1 c_2] - \sum (-1)^\alpha [c_1]\{[\ol c_2^{(1)}],\ldots, [\ol c_2^{(k)}]\},\] we have $\partial g(c_1, c_2) \in I$. This is essentially an exercise in manipulating Identity \ref{DIFF} and computing signs; we omit the details.
\end{proof}

Thus, $\partial$ defines a differential on $\widetilde \Omega (C)$. Therefore, a map $\widetilde \Omega C \to A$ of $\mathcal S_2$ algebras is the same thing as a map $\Sigma^{-1} \ol C \to A$ that
extends to a map $(\mathbb S_2 (\Sigma^{-1} \ol C), \partial) \to A$ such that every generator in $I$ is sent to zero.
This description makes the following proposition clear.
\begin{prop}The functor $\widetilde \Omega : \HopfAlg \to \SAlg{2}$ is left adjoint to $B: \SAlg{2} \to \HopfAlg$.
\end{prop}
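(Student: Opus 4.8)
The goal is to establish the adjunction
\[
\Hom_{\SAlg{2}}(\widetilde \Omega C, A) \cong \Hom_{\HopfAlg}(C, BA),
\]
natural in the Hopf algebra $C$ and the $\mathcal S_2$ algebra $A$. The plan is to trace through the chain of identifications that the preceding discussion has already set up and verify that each is a natural bijection. The strategy rests on the explicit description of $\widetilde \Omega C$ as a quotient of the free $\mathcal S_2$ algebra $(\mathbb S_2(\Sigma^{-1}\ol C), \partial)$ by the ideal $I$; the universal property of this presentation is what converts maps out of $\widetilde \Omega C$ into concrete data that can be matched against Hopf twisting morphisms.

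First I would use freeness: since $\mathbb S_2(\Sigma^{-1}\ol C)$ is the free $\mathcal S_2$ algebra on the chain complex $\Sigma^{-1}\ol C$, a map of $\mathcal S_2$ algebras out of it is the same as a degree-zero $R$-linear map $\Sigma^{-1}\ol C \to A$, equivalently a degree $-1$ map $g \colon \ol C \to A$. The differential $\partial = d + \partial_\Delta$ on the free algebra was built precisely so that compatibility of $g$ with $\partial$ is the twisting morphism equation $dg + gd = m\circ(g\otimes g)\circ\ol\Delta$; so maps of $\mathcal S_2$ \emph{chain} algebras $(\mathbb S_2(\Sigma^{-1}\ol C),\partial) \to A$ correspond exactly to twisting morphisms $C \to A$. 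This is the same mechanism underlying the classical cobar adjunction recalled earlier in Section \ref{COBAR}. Next, a map out of the quotient $\widetilde \Omega C = \mathbb S_2(\Sigma^{-1}\ol C)/I$ is the same as a map out of the free algebra that kills every generator of $I$. By construction the generators of $I$ are exactly the expressions
\[
[c_1 c_2] - \sum (-1)^\alpha [c_1]\{[\ol c_2^{(1)}],\ldots, [\ol c_2^{(k)}]\},
\]
so requiring $g$ to annihilate them is precisely the Hopf twisting morphism condition of Definition \ref{hopftwist}. Hence $\mathcal S_2$-algebra maps $\widetilde \Omega C \to A$ correspond bijectively to Hopf twisting morphisms $C \to A$.

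Finally I would invoke the Proposition following Definition \ref{hopftwist}, which states that the canonical bijection between twisting morphisms $C \to A$ and coalgebra maps $C \to BA$ restricts to a bijection between Hopf twisting morphisms and Hopf algebra maps $C \to BA$. Composing the three identifications gives
\[
\Hom_{\SAlg{2}}(\widetilde \Omega C, A) \cong \{\text{Hopf twisting morphisms } C \to A\} \cong \Hom_{\HopfAlg}(C, BA),
\]
which is the desired adjunction. The one remaining point is naturality of each bijection in both variables: naturality in $A$ follows from the functoriality of the free-algebra universal property and of the bar construction's twisting-morphism correspondence, while naturality in $C$ follows because all the constructions ($\Sigma^{-1}\ol C$, the differential $\partial_\Delta$, and the ideal $I$) are manifestly functorial in the Hopf algebra $C$.

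I expect the main obstacle to be purely bookkeeping rather than conceptual: one must confirm that the preceding lemma ($\partial I \subset I$) genuinely makes $\widetilde \Omega C$ an $\mathcal S_2$ chain algebra, so that the quotient description is legitimate, and one must check that a map killing the \emph{generators} of $I$ automatically kills all of $I$ — this is where the compatibility $\partial I \subset I$ is used, guaranteeing that the twisting-morphism condition propagates consistently through higher braces via Identity \ref{assoc}. Since the author has already recorded that the generator identities match Definition \ref{hopftwist} verbatim, the verification amounts to assembling these established pieces, and the proof should be short.
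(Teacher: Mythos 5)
Your proposal is correct and takes essentially the same route as the paper: the paper's proof is exactly the observation that an $\mathcal S_2$-algebra map $\widetilde \Omega C \to A$ is the same as a map $\Sigma^{-1}\ol C \to A$ extending over $(\mathbb S_2(\Sigma^{-1}\ol C), \partial)$ (the twisting morphism condition) and killing the generators of $I$ (the Hopf twisting condition), combined with the earlier bijection between Hopf twisting morphisms and Hopf algebra maps $C \to BA$. One small correction: a map killing the generators of $I$ kills all of $I$ simply because an $\mathcal S_2$-algebra map annihilates any operation having an input in its kernel; the lemma $\partial I \subset I$ is needed only so that $\partial$ descends to the quotient, not for that step.
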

The map $C \to \widetilde \Omega C$ is evidently a Hopf twisting morphism, which is in particular a twisting morphism, and we have already seen that the map $C\to \Omega C$ is a Hopf twisting morphism. Thus, we obtain maps $\Omega C \to \widetilde \Omega C \to \Omega C$. The first of these is a map of algebras, and the second is a map of $\mathcal S_2$ algebras. Furthermore, the composition is clearly the identity.

The rest of this section is devoted to proving the following theorem and its corollaries.
\begin{thm}\label{barthm}The $\mathcal S_2$ algebra map $\widetilde \Omega C \to \Omega C$ is a strong associative algebra deformation retraction, with section $\Omega C \to \widetilde \Omega C$.
\end{thm}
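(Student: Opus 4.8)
The plan is to produce the contracting homotopy explicitly and then check the deformation-retraction axioms. We already have algebra maps $i\colon\Omega C\to\widetilde\Omega C$ (the section) and $p\colon\widetilde\Omega C\to\Omega C$ (a map of $\mathcal S_2$ algebras, hence of algebras) with $p i=\mathrm{id}_{\Omega C}$, so the adjective \emph{associative algebra} in the statement is already witnessed on the structure maps. What remains is a degree $+1$ map $h\colon\widetilde\Omega C\to\widetilde\Omega C$ satisfying $\partial h+h\partial=\mathrm{id}-i p$ together with the side conditions $h i=0$, $p h=0$, and $h^2=0$. I would build $h$ so that it is compatible with the augmentation and left $\Omega C$-linear, so that the whole retraction is one of augmented algebras over $\Omega C$.

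First I would fix a manageable presentation of the underlying graded algebra. Using Identity \ref{mult} to move products out of the root slot of every brace, one checks that $\widetilde\Omega C$ is generated as an associative algebra by the \emph{root-braces} $[c]\{w_1,\dots,w_n\}$ (a single generator $[c]$ in the root, $n\ge 0$), with the classical cobar $\Omega C=T(\Sigma^{-1}\ol C)$ appearing as the subalgebra generated by the $n=0$ root-braces; the inclusion of this subalgebra is $i$, and $p$ collapses each root-brace to its value in $\Omega C$. This exhibits $\widetilde\Omega C$ as quasi-free over $\Omega C$ and identifies the ``extra'' generators that $p$ kills as exactly the root-braces with $n\ge 1$ (e.g.\ $[c]\{[d],[e]\}$, which $p$ sends to $0$ by the initial condition on the higher braces of $\Omega C$).

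Next I would filter $\widetilde\Omega C$ by a weight preserved by $i$ and $p$ and controlling $\partial$ — for instance the total number of $\Sigma^{-1}\ol C$-letters, under which the internal differential and the product-terms of Identity \ref{DIFF} are weight-preserving while $\partial_\Delta$ raises weight by one — and construct $h$ one filtration layer at a time. The core is an explicit nullhomotopy of the higher-brace generators: I would define $h$ to strip the outermost brace, set it to zero on the image of $i$ and on its own image (so that $h i=0$, $p h=0$, $h^2=0$ hold by construction), and propagate it across products by left $\Omega C$-linearity, extending from the root-braces to the tensor presentation by the standard tensor-trick/derivation-homotopy. The equation $\partial h+h\partial=\mathrm{id}-i p$ is then verified layer by layer, and because $i,p$ commute with $\partial$ one also checks that $h$ descends through the ideal $I$ (refining $\partial I\subset I$ to $h I\subset I$), so that $h$ is genuinely defined on the quotient $\widetilde\Omega C$ and not merely on $\mathbb S_2(\Sigma^{-1}\ol C)$.

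The main obstacle is that the contraction is \emph{not} visible on the naive associated graded: the operad $\mathcal S_2$ carries the Gerstenhaber bracket in its homology, so the higher-brace generators are not acyclic for the weight-preserving part of $\partial$ alone, and one cannot split $\partial$ into an acyclic unperturbed piece plus a small perturbation in the usual way. It is precisely the coproduct differential $\partial_\Delta$, together with the product-terms of Identity \ref{DIFF} encoding Adams' relation $d([x]\{[y]\})=\pm([x|y]\mp[y|x])+\cdots$, that makes the bracket a boundary and supplies the contraction. Consequently the homotopy must be built for the full differential $\partial=d+\partial_\Delta$ at once, and the delicate part of the argument is the sign bookkeeping in $\partial h+h\partial=\mathrm{id}-ip$ and the verification that the stripping homotopy respects $I$.
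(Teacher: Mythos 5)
Your overall frame---a section $i\colon\Omega C\to\widetilde\Omega C$ and retraction $p$ with $pi=\mathrm{id}$, a degree $+1$ map $h$ with $\partial h+h\partial=\mathrm{id}-ip$ and side conditions $hi=0$, $ph=0$, $h^2=0$, built by induction over a filtration and extended multiplicatively---matches the shape of the paper's proof. But the core of the argument, the actual value of $h$ on generators, is missing, and the candidate you give fails. ``Stripping the outermost brace'' is wrong in both degree and direction: $h$ must raise degree by $1$, whereas removing the brace from $[c]\{w_1,\dots,w_n\}$ lowers degree by $n$. The correct homotopy goes the opposite way: it \emph{creates} brace depth and \emph{vanishes} on the higher braces. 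Concretely, take $c_1,c_2,c_3$ primitive with zero differential and $x=[c_1]\{[c_2][c_3]\}$. Then $h(\partial x)=0$ and $ip(x)=\pm[c_1]\{[c_2]\}[c_3]\pm[c_2][c_1]\{[c_3]\}$, so the homotopy identity forces $\partial h(x)=x-ip(x)$, whose solution (by Identity \ref{DIFF}) is $h(x)=\pm[c_1]\{[c_2],[c_3]\}$; operadically $h$ inserts an extra occurrence of the repeated index, sending $(1,2,3,1)$ to $(1,2,1,3,1)$. Then $h^2=0$ forces $h([c_1]\{[c_2],[c_3]\})=0$, not a ``stripped'' nonzero value. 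Relatedly, your identification of the generators killed by $p$ as the root-braces with $n\ge 1$ is wrong for $n=1$: the first brace $[c]\{[d]\}=\pm[c^{(1)}d^{(1)}|\cdots]$ computed in $\Omega C$ is Adams' $\cup_1$ and is nonzero; only the braces with at least two arguments die.

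More seriously, you correctly identify the obstacle (the higher-brace generators are not acyclic for any weight-preserving part of $\partial$, so no naive perturbation/associated-graded argument applies), but you offer no mechanism to overcome it; ``build $h$ for the full differential at once with careful signs'' restates the problem rather than solving it. The paper's mechanism is precisely what your plan lacks: by naturality one reduces to $C=TV$ with $V$ free on primitives with zero differential, where every relevant element has the form $\alpha([c_1],\dots,[c_k])$ with $\alpha\in\mathcal S_2(k)$ and $\partial$ acts purely through the operadic differential $d\alpha$; Lemma \ref{freesub} (injectivity of $\mathbb S_2(\Sigma^{-1}V)\to\widetilde\Omega C$) then converts the homotopy problem into one inside the operad, namely $x-p(x)-h\partial x=o_\alpha([c_1],\dots,[c_k])$ for a unique cycle $o_\alpha\in\mathcal S_2(k)$, which must be integrated to $b_\alpha$ with $db_\alpha=o_\alpha$. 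The integration uses the explicit McClure--Smith/Benson insertion operator $h_{(j,S)}$, together with two delicate verifications that have no counterpart in your proposal: the error term $t_{(j_\alpha,S_\alpha)}(o_\alpha)$ vanishes (a case analysis on how many times $j_\alpha$ repeats), and $b_\alpha$ lies in $\mathcal S_2(k)$---complexity at most $2$---and not merely in $\mathcal S(k)$. Without these steps, or a substitute for them, the contraction is not constructed. Finally, note that the paper builds $h$ directly on the quotient $\widetilde\Omega C$ via this embedding, thereby avoiding the extra verification $h(I)\subset I$ that your plan of working on $\mathbb S_2(\Sigma^{-1}\ol C)$ would require.
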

\begin{cor}\label{adjcor} The unit map $C \to B \widetilde \Omega C$ and the counit map $\widetilde \Omega B A \to A$ are weak equivalences.
\end{cor}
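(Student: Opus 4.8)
The plan is to deduce Corollary \ref{adjcor} from Theorem \ref{barthm} using the homotopy invariance of the bar construction together with the classical adjunction $(\Omega, B)$ on the underlying (co)algebra level. The key observation is that the bar construction $B$ depends only on the underlying associative algebra structure (by the second Gerstenhaber--Voronov theorem, $B$ commutes with the forgetful functors to $\SAlg{1}$ and $\CoAlg$), so homotopical statements about $B$ reduce to classical facts about the usual bar construction of algebras.

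First I would apply $B$ to the strong deformation retraction data of Theorem \ref{barthm}. By that theorem, the map $\widetilde \Omega C \to \Omega C$ is a strong associative algebra deformation retraction with section $\Omega C \to \widetilde \Omega C$; in particular it is a weak equivalence of algebras, hence of chain complexes. The classical fact from \cite{HMS} is that the bar construction sends weak equivalences of algebras to weak equivalences of coalgebras (the associated graded of the length filtration computes $\Tor$, which is preserved). Thus $B\widetilde \Omega C \to B\Omega C$ is a weak equivalence of coalgebras. Now compose the unit $C \to B\widetilde \Omega C$ with this map to obtain $C \to B\widetilde \Omega C \to B\Omega C$, and observe that this composite is precisely the Hopf algebra map $C \to B\Omega C$ constructed at the end of Section \ref{COBAR}, which Theorem \ref{unit} asserts is a weak equivalence. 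By the two-out-of-three property, the unit $C \to B\widetilde \Omega C$ is therefore a weak equivalence.

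For the counit $\widetilde \Omega BA \to A$, I would factor it through the classical counit. The classical cobar-bar adjunction counit $\Omega BA \to A$ is a weak equivalence of algebras by \cite{HMS} (Theorem 4.4). The $\mathcal S_2$ counit $\widetilde \Omega BA \to A$ can be compared to this via the natural map $\Omega BA \to \widetilde \Omega BA$ (the algebra map arising from the section of Theorem \ref{barthm}, applied with $C = BA$), since the triangle identities relate the $\mathcal S_2$ counit to the classical one. By Theorem \ref{barthm} the map $\widetilde \Omega BA \to \Omega BA$ is a weak equivalence, so $\Omega BA \to \widetilde \Omega BA$ is a weak equivalence as well, and composing with the classical counit weak equivalence $\Omega BA \to A$ and invoking two-out-of-three shows $\widetilde \Omega BA \to A$ is a weak equivalence.

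The main obstacle I anticipate is bookkeeping the compatibility of the two adjunction counits: one must check that the classical counit $\Omega BA \to A$ and the $\mathcal S_2$ counit $\widetilde \Omega BA \to A$ fit into a commuting triangle with the comparison map $\Omega BA \to \widetilde \Omega BA$, so that two-out-of-three may be applied. This is a formal verification via the triangle identities for the respective adjunctions, but it requires carefully tracking that the underlying algebra map of the $\mathcal S_2$ counit agrees with the classical counit under the identifications of Section \ref{COBAR}. Once this compatibility is established, both statements follow immediately from the weak equivalences supplied by Theorems \ref{barthm} and \ref{unit} together with the homotopy invariance of $B$.
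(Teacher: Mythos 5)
Your proposal is correct and follows essentially the same route as the paper: apply $B$ to the equivalence $\widetilde\Omega C \to \Omega C$ of Theorem \ref{barthm}, use two-out-of-three on the composite $C \to B\widetilde\Omega C \to B\Omega C$ (a weak equivalence by Theorem \ref{unit}), and handle the counit by comparing $\widetilde\Omega BA \to A$ with the classical counit $\Omega BA \to A$ through the section $\Omega BA \to \widetilde\Omega BA$, which is exactly how the paper's ``the second follows similarly'' is meant. The one imprecision is your claim that $B$ sends arbitrary weak equivalences of algebras to weak equivalences of coalgebras: over a general commutative ring $R$ this is false (tensor powers do not preserve quasi-isomorphisms without flatness hypotheses), which is why the paper instead uses the chain \emph{homotopy} equivalence furnished by the strong deformation retraction together with the word-length filtration; since Theorem \ref{barthm} does supply a homotopy equivalence, your argument goes through after replacing the general claim attributed to \cite{HMS} by this weaker, correct statement.
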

\begin{proof}If $A\to A'$ is a map of algebras that is a homotopy equivalence of complexes, then using the word length filtration, we see that $BA \to BA'$ is a weak equivalence. Thus, we apply two-out-of-three to $C\to B\widetilde \Omega C \to B\Omega C$ to obtain the first equivalence.
The second follows similarly.
\end{proof}

\begin{cor}\label{equivthm}$\Omega B A$ is weakly equivalent to $A$ as an $\mathcal S_2$ algebra.
\end{cor}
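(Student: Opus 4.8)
The plan is to produce a short zig-zag of weak equivalences of $\mathcal S_2$ algebras connecting $\Omega BA$ and $A$, passing through the intermediate object $\widetilde\Omega BA$, and to assemble it from the two results just established. Setting $C = BA$ in Theorem \ref{barthm}, I obtain an $\mathcal S_2$ algebra map $\widetilde\Omega BA \to \Omega BA$ that is a strong associative algebra deformation retraction of the underlying complexes. A deformation retraction is in particular a chain homotopy equivalence, hence a homology isomorphism, so this map is a weak equivalence of $\mathcal S_2$ algebras.

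On the other side, Corollary \ref{adjcor} supplies the counit $\widetilde\Omega BA \to A$ of the adjunction $\widetilde\Omega \dashv B$ and asserts that it too is a weak equivalence; being the counit of this adjunction, it is a map of $\mathcal S_2$ algebras. Together these give the diagram
\[
\Omega BA \longleftarrow \widetilde\Omega BA \longrightarrow A,
\]
in which both arrows are weak equivalences in $\SAlg{2}$. Hence $\Omega BA$ and $A$ are weakly equivalent as $\mathcal S_2$ algebras, which is the assertion.

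The argument is essentially formal once Theorem \ref{barthm} and Corollary \ref{adjcor} are in hand, and I expect no genuine obstacle. The one point that requires care is the handedness of the retraction. The section $\Omega BA \to \widetilde\Omega BA$ is merely an algebra map, not an $\mathcal S_2$ map, so it cannot serve as a leg of a zig-zag in $\SAlg{2}$; it is precisely the comparison map $\widetilde\Omega BA \to \Omega BA$ that is guaranteed to respect the brace structure. This is why $\widetilde\Omega BA$ must sit at the apex of the zig-zag rather than at its base. All the substantive work has already been discharged in proving that this comparison map is simultaneously an $\mathcal S_2$ map and a deformation retraction, and that the counit is a weak equivalence, so the corollary reduces to reading off the two weak equivalences with the correct orientation.
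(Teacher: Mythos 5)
Your proposal is correct and follows exactly the paper's argument: the zig-zag $\Omega BA \longleftarrow \widetilde\Omega BA \longrightarrow A$, with the left arrow a weak equivalence of $\mathcal S_2$ algebras by Theorem \ref{barthm} (applied to $C = BA$) and the right arrow the counit, which is an $\mathcal S_2$ map that is a weak equivalence by Corollary \ref{adjcor}. Your additional remark about the handedness of the retraction---that the section $\Omega BA \to \widetilde\Omega BA$ is only an algebra map and so $\widetilde\Omega BA$ must be the apex of the zig-zag---is a correct and worthwhile clarification of a point the paper leaves implicit.
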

\begin{proof}We have the following zig-zag of $\mathcal S_2$ algebra equivalences: \[\Omega B A \longleftarrow \widetilde \Omega B A \longrightarrow A.\]
\end{proof}
\begin{cor}Assume that $R$ is a field. Then, the adjunction
\[
\xymatrix{\HopfAlg \ar@<1ex>[rr]^{\widetilde \Omega}&& \SAlg{2} \ar@<1ex>[ll]^B\ar@<1ex>[ll]}
\]
is a right semi-model approximation in the sense of \cite{JeromeW}.
\end{cor}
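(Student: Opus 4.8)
The plan is to verify, for the adjunction $\widetilde\Omega \colon \HopfAlg \rightleftarrows \SAlg{2} \colon B$, the three axioms in the definition of a right semi-model approximation from \cite{JeromeW} (Chapter I, Section 5), where $\HopfAlg$ is equipped with the weak equivalences given by homology isomorphisms and $\SAlg{2}$ with the semi-model structure of Mandell \cite{mandellenfpadic} and Fresse \cite{fressebook}. Since $R$ is a field, in this semi-model structure the weak equivalences are the quasi-isomorphisms and the fibrations are the surjections, so that every $\mathcal S_2$ algebra is fibrant (the augmentation $A \to R$ to the terminal object is surjective). Hence the fibrancy hypotheses appearing in the axioms are automatic, and it remains to check: (1) $\widetilde\Omega$ preserves weak equivalences; (2) $B$ preserves weak equivalences; and (3) for $C \in \HopfAlg$ and any $\mathcal S_2$ algebra $A$, a map $g \colon \widetilde\Omega C \to A$ is a weak equivalence if and only if its adjoint $\bar g \colon C \to BA$ is.

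For (2) I would note that over a field every weak equivalence of $\mathcal S_2$ algebras is a chain homotopy equivalence of the underlying complexes; the argument already used in the proof of Corollary \ref{adjcor} (the word-length filtration on $BA$ together with the K\"unneth theorem over the field) then shows that $B$ carries it to a weak equivalence. For (1) I would use the natural $\mathcal S_2$ algebra map $\widetilde\Omega C \to \Omega C$, which is a weak equivalence by Theorem \ref{barthm}. Given a weak equivalence $C \to C'$ of Hopf algebras, naturality yields a commuting square whose two vertical maps are these weak equivalences; since the classical cobar construction $\Omega$ preserves weak equivalences of conilpotent coalgebras over a field (an analogous word-length filtration argument, using the standing assumption that all coalgebras are conilpotent), the bottom map $\Omega C \to \Omega C'$ is a weak equivalence, and two-out-of-three forces the top map $\widetilde\Omega C \to \widetilde\Omega C'$ to be one as well.

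Axiom (3) is then purely formal. Writing $\eta_C \colon C \to B\widetilde\Omega C$ for the unit and $\epsilon_A \colon \widetilde\Omega BA \to A$ for the counit, the triangle identities give $\bar g = B(g)\circ \eta_C$ and $g = \epsilon_A \circ \widetilde\Omega(\bar g)$, and by Corollary \ref{adjcor} both $\eta_C$ and $\epsilon_A$ are weak equivalences. If $g$ is a weak equivalence then $B(g)$ is one by (2), whence $\bar g$ is a weak equivalence by two-out-of-three; conversely, if $\bar g$ is a weak equivalence then $\widetilde\Omega(\bar g)$ is one by (1), whence $g$ is a weak equivalence by two-out-of-three. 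This establishes all three axioms and hence the corollary.

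The only genuinely delicate point I anticipate lies beneath (1) and (2): one must ensure that the word-length filtrations on $\Omega C$ and on $BA$ are exhaustive and that the associated spectral sequences converge, so that an isomorphism on the associated graded homology (supplied by K\"unneth over the field) really does imply a weak equivalence. This is precisely where both the field hypothesis and the conilpotency assumption are indispensable, and it is exactly the convergence input reused from the proof of Corollary \ref{adjcor}; everything else reduces formally to the adjunction together with the equivalences of Corollary \ref{adjcor}.
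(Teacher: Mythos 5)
Your overall route is the same as the paper's: take the homology isomorphisms as the weak equivalences on $\HopfAlg$, invoke Mandell--Fresse for the semi-model structure on $\SAlg{2}$, check that $\widetilde\Omega$ and $B$ are homotopy meaningful, and deduce the adjoint condition from Corollary \ref{adjcor} together with preservation of weak equivalences by $B$. (The paper only checks the one direction of your step (3) that Definition 5.1 of \cite{JeromeW} actually requires; your converse and your fibrancy remark are extra but harmless.) Your treatment of $B$ is also the paper's: over a field a quasi-isomorphism is a chain homotopy equivalence of complexes, and the increasing word-length filtration on the bar construction is bounded below and exhaustive, so the argument of Corollary \ref{adjcor} applies.

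The genuine gap is in your step (1). You claim that $\Omega$ preserves quasi-isomorphisms of conilpotent coalgebras over a field ``by an analogous word-length filtration argument.'' That step fails: on $\Omega C$ the coproduct part of the differential \emph{raises} word length, so the relevant filtration is the decreasing one, and its spectral sequence need not converge --- if $\ol C$ has elements of degree $1$, then a fixed total degree of $\Omega C$ contains words of arbitrary length, and conilpotency does not cure this. In fact the assertion is false in this generality. Over $R=\mathbb Q$, let $A=\mathbb Q[t,t^{-1}]$ be the group algebra of $\mathbb Z$ in degree $0$; since $A$ is commutative, $BA$ with the shuffle product is a (conilpotent, connected) Hopf algebra, and $H_*(BA)\cong H_*(\mathbb Z;\mathbb Q)$. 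The map $\Lambda(x)\to BA$, $x\mapsto [t-1]$, from the exterior Hopf algebra on a primitive degree-$1$ generator is then a quasi-isomorphism of Hopf algebras. But $\Omega\Lambda(x)=\mathbb Q[y]$ with zero differential, whereas $H_*(\Omega BA)\cong \mathbb Q[t,t^{-1}]$ by the counit equivalence (Corollary \ref{equivthm}), and the induced map sends $y\mapsto t-1$, which is not surjective on $H_0$; by Theorem \ref{barthm} the same failure occurs for $\widetilde\Omega$. To be fair, the paper's own proof simply asserts that ``since $R$ is a field, $\widetilde\Omega$ and $B$ preserve weak equivalences,'' so you have reproduced a gap already present there rather than introduced a new one; but the specific argument you attach cannot close it, and some connectivity hypothesis on the Hopf algebras (e.g.\ $\ol C$ concentrated in degrees $\ge 2$, where the word-length spectral sequence for $\Omega$ does converge) seems to be what is implicitly needed, both for this step and for the corollary as stated.
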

\begin{proof}See Definition $5.1$ of \cite{JeromeW} for the axioms. The category $\HopfAlg$ is a ``category with weak equivalences'' given by the homology isomorphisms. According to Mandell \cite{mandellenfpadic}(Section 13, see also Fresse \cite{fressebook} (Part III Section 12)), $\SAlg{2}$ is a semi-model category. Since $R$ is a field, $\widetilde \Omega $ and $B$ preserve weak equivalences, i.e., they are ``homotopy meaningful''. If $\widetilde \Omega C \to A$ is a weak equivalence, then its adjoint is given by the composite $C \to B\widetilde \Omega C \to BA$. The first map in the composition is a weak equivalence by Corollary \ref{adjcor}. The second map is a weak equivalence because $B$ preserves weak equivalences.
\end{proof}

We will now prove Theorem \ref{barthm}. First, we state a technical lemma that will help to construct the required homotopy.

\begin{lem}\label{freesub}Let $C = TV$ be a tensor Hopf algebra on the free graded $R$-module $V$, where $V$ consists of primitive elements. Then, the composition
\[i: \mathbb S_2(\Sigma^{-1}V) \to \mathbb S_2(\Sigma^{-1}\ol C) \to \widetilde \Omega C\] is injective.
\end{lem}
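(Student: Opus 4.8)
The statement to prove is Lemma \ref{freesub}: for a tensor Hopf algebra $C = TV$ on a primitively generated free module $V$, the composition $i: \mathbb S_2(\Sigma^{-1}V) \to \mathbb S_2(\Sigma^{-1}\ol C) \to \widetilde \Omega C$ is injective.

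Let me think carefully about the structure here. $\widetilde\Omega C$ is the quotient of the free $\mathcal S_2$-algebra $\mathbb S_2(\Sigma^{-1}\ol C)$ by the ideal $I$ generated by the Hopf relations $g(c_1,c_2) = [c_1c_2] - \sum(-1)^\alpha [c_1]\{[\ol c_2^{(1)}],\ldots\}$. The module $\ol C = \overline{TV}$ is spanned by nonempty words $v_1\cdots v_r$ in the generators. Since the generators $V$ are primitive, the reduced diagonal $\ol\Delta$ on a word is the usual deconcatenation-type diagonal for the tensor Hopf algebra. The elements $[v]$ for $v \in V$ are the generators coming from $\mathbb S_2(\Sigma^{-1}V)$.

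The idea is to exhibit a splitting or a normal form. The relations $g(c_1,c_2)$ let one rewrite any generator $[w]$ where $w$ is a word of length $\ge 2$ in terms of braces applied to generators $[w']$ with strictly shorter words (by taking $c_1$ a single letter and $c_2$ the rest, and using that the braces only involve the pieces of the reduced diagonal of $c_2$, which have shorter words). Thus the relations give a rewriting system that expresses every element of $\widetilde\Omega C$ in terms of the subalgebra generated by the $[v]$, $v\in V$. This shows the map $i$ is surjective onto $\widetilde\Omega C$, or at least that $\widetilde\Omega C$ is a quotient of $\mathbb S_2(\Sigma^{-1}V)$; but we need injectivity.

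For injectivity, the cleanest approach is to construct a retraction, i.e., an $\mathcal S_2$-algebra map $r: \widetilde\Omega C \to \mathbb S_2(\Sigma^{-1}V)$ with $r \circ i = \mathrm{id}$. By the adjunction, a map of $\mathcal S_2$-algebras out of $\widetilde\Omega C$ is the same as a Hopf twisting morphism $C \to \mathbb S_2(\Sigma^{-1}V)$. So I want a Hopf twisting morphism $f: TV \to \mathbb S_2(\Sigma^{-1}V)$ such that on $V$ it sends $v \mapsto [v]$ (the canonical generator). Since $V$ consists of primitive elements with zero differential, Corollary \ref{freehopf} applies directly: any degree $-1$ $R$-linear map $V \to \mathbb S_2(\Sigma^{-1}V)$ extends uniquely to a Hopf twisting morphism on $TV$. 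Taking the map $v \mapsto [v]$ (which has the correct degree), I obtain a Hopf twisting morphism $f$, hence an $\mathcal S_2$-algebra map $r: \widetilde\Omega C \to \mathbb S_2(\Sigma^{-1}V)$. The composite $r\circ i$ is an $\mathcal S_2$-algebra endomorphism of $\mathbb S_2(\Sigma^{-1}V)$ which sends each generator $[v]$ to $[v]$ (since the adjoint of $r$ restricted to $V$ is $v\mapsto [v]$, and $i$ sends the free generator $[v]$ to the class of $[v]$); as $\mathbb S_2(\Sigma^{-1}V)$ is free on $\Sigma^{-1}V$, a self-map fixing the generators is the identity. Therefore $r\circ i = \mathrm{id}$, so $i$ is injective.

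The plan, then, is as follows. First I would note that $\mathbb S_2(\Sigma^{-1}V)$ is itself an $\mathcal S_2$-algebra, so by Corollary \ref{freehopf} the $R$-linear map $V\to \mathbb S_2(\Sigma^{-1}V)$, $v\mapsto [v]$, of degree $-1$ extends uniquely to a Hopf twisting morphism $f: C = TV \to \mathbb S_2(\Sigma^{-1}V)$. Second, by the adjunction (Proposition immediately preceding, characterizing $\widetilde\Omega$ as left adjoint to $B$, together with the identification of maps $\widetilde\Omega C\to A$ with Hopf twisting morphisms $C\to A$), this $f$ corresponds to a map of $\mathcal S_2$-algebras $r:\widetilde\Omega C \to \mathbb S_2(\Sigma^{-1}V)$. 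Third, I would verify that $r\circ i$ fixes the generators $[v]$, $v\in V$, and invoke freeness of $\mathbb S_2(\Sigma^{-1}V)$ to conclude $r\circ i = \mathrm{id}$, giving injectivity of $i$. The main thing to check with care is that the $\mathcal S_2$-algebra map $r$ really does restrict on generators to $v\mapsto [v]$ and hence that $r\circ i$ is the identity on the generating set; this is where one uses that $i$ sends the free generator indexed by $v$ to the class of $[v]$ and that $r$ is determined by the twisting morphism $f$ via the universal twisting morphism $C\to\widetilde\Omega C$. I do not anticipate a serious obstacle here, as the construction reduces entirely to the universal properties already established; the only delicate point is bookkeeping the identification of the two descriptions of morphisms, which is routine.
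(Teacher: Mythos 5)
Your proof is correct and is essentially identical to the paper's: both construct a retraction $r:\widetilde\Omega C \to \mathbb S_2(\Sigma^{-1}V)$ by extending $v\mapsto [v]$ to a Hopf twisting morphism via Corollary \ref{freehopf} and then invoking the universal property of $\widetilde\Omega$. Your additional verification that $r\circ i$ fixes the free generators is just a slightly more explicit spelling-out of the same argument.
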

\begin{proof}The natural degree $-1$ map $V \to \mathbb S_2(\Sigma^{-1}V)$ extends uniquely to a Hopf twisting morphism $f: C \to \mathbb S_2(\Sigma^{-1}V)$ by Corollary \ref{freehopf}. This defines a retraction $r: \widetilde \Omega C \to \mathbb S_2(\Sigma^{-1}V)$ of $i$.
\end{proof}

\begin{proof}[Proof of Theorem \ref{barthm}]
Let $C$ be a Hopf algebra. We will use the notation: $\pi\colon \widetilde \Omega C \to \Omega C$, $\iota\colon \Omega C \to \widetilde \Omega C$, and $p = \iota \pi$. The Hopf algebra embedding $C \to B\Omega C$, together with the bar filtration on $B\Omega C$, induces
a Hopf algebra filtration $F_n C$ on $C$ such that $F_n C = \ker \ol \Delta^{(n+1)}$. This filtration induces an $\mathcal S_2$ algebra filtration on $\widetilde \Omega C$ and $\Omega C$.
We will now define a natural homotopy $h\colon \widetilde \Omega C \to \widetilde \Omega C$, such that
\begin{align}
h^2 &= 0\label{f1}\\
hp &= 0 = ph \label{f2}
\end{align} by induction on the filtration. We consider the case that $C$ has the zero differential. If $C$ has a differential $d_C$, then we can ignore the differential and obtain an $h$ that, once it has been defined, clearly satisfies $hd_C = -d_C h$.

We begin by setting $h(\alpha([c_1], \ldots, [c_k])) = 0$ with $c_i\in \ol C$ primitive, and $\alpha \in \mathcal S_2(k)_0$. This defines a derivation homotopy on filtration level $0$ satisfying (\ref{f1}) and (\ref{f2}).

We next define $h(x)$ for $x = \alpha([c_1], \ldots, [c_k])$, where the $c_i$ are all primitive, by induction on the degree of a single sequence $\alpha \in \mathcal S_2(k)$. By naturality, we can assume that $C = TV$ and $V$ is freely generated by primitives $c_1, \ldots, c_k$. Consider $y = x-p(x)-h\partial x$. Observe that $y \in \mathbb S_2(\Sigma^{-1} V)$, and so by Lemma \ref{freesub}, there is a unique $o_\alpha \in \mathcal S_2(k)$ such that $y = o_\alpha([c_1], \ldots, [c_k]))$. Since $\partial x$ is the sum of elements of the form $\beta([c_1], \ldots, [c_k])$, where $|\beta| < |\alpha|$, we have by induction that $\partial h \partial x = \partial x - p(\partial x)$, therefore $\partial y = 0$, and thus $d o_\alpha = 0$. Our goal now is to find $b_\alpha \in \mathcal S_2(k)$ such that $d b_\alpha = o_\alpha$. Define $j_\alpha$ to be the first repeated integer to occur in the sequence $\alpha$ from left to right, and let $S_\alpha$ be the set of integers that occur before $j_\alpha$.

We now define a general class of operators on $\mathcal S(k)$. Let $1\le j \le k$, and let $S\subset \{1, \ldots, k\}$ such that $j \not\in S$. Then, we define $h_{(j, S)}: \mathcal S(k) \to \mathcal S(k)$ to be given on a sequence by inserting a $j$, after the first integer in the sequence that is not in the set $S$. This is a variation of a homotopy given by McClure--Smith (which they credit to Benson) \cite{McSmith2} (p. 689) to prove that $\mathcal S$ is an $E_\infty$ operad. Thus, we can see that the following equation is satisfied for sequences in which the elements of $S$ appear exactly once:
\[dh_{(j, S)} + h_{(j,S)} d = 1 + t_{(j, S)},\] where $t_{(j, S)}$ is zero if $j$ occurs more than once. If $j$ occurs exactly once and is the first integer in the sequence not in the set $S$, it gives minus the sequence. Otherwise, it gives minus the sequence obtained by deleting the original occurence of $j$ and putting it just before the first integer not in the set $S$.

Then, given $o_\alpha$ we define $b_\alpha = h_{(j_\alpha, S_\alpha)}(o_\alpha)$. Since $o_\alpha$ is a cycle, and the elements of $S_\alpha$ do not repeat in $o_\alpha$, we have that $db_\alpha = o_\alpha + t_{(j_\alpha, S_\alpha)}(o_\alpha)$.

We will now verify two things about the construction above, first that \[t_{(j_\alpha, S_\alpha)}(o_\alpha) = 0,\] and second, that $b_\alpha$ is indeed in $\mathcal S_2(k)$ and not just in $\mathcal S(k)$, as defined. Both will require analyzing the $\mathcal S_2$ operations on $\Omega C$, and we will now proceed by checking special cases. If $j_\alpha$ occurs at least three times in $\alpha$ then $p(x) = 0$, and $j_\alpha$ occurs at least twice in the terms in $h\partial x$. In this case,
$t(o_\alpha) = 0$. Now, we focus on the case when $j_\alpha$ occurs exactly twice. The same will be true for all terms in $\partial x$, except for
those two that come from deleting $j_\alpha$, these terms have opposite signs and cancel after applying $h$ and then $t_{(j_\alpha, S_\alpha)}$. Now we observe that $p(x)=0$ or $p(x)$ has only terms with $j_\alpha$ as a repeat. Thus, in this case $t(o_\alpha) = 0$, as well. If $i$ is an integer appearing in $\alpha$ that does not occur between repeats, then the same is true of $o_\alpha$ and thus $b_\alpha$. Now, in $\alpha$, $j_\alpha$ is not contained between repeats, thus $b_\alpha \in \mathcal S_2$.

We now make some observations about $h$. By induction on the degree of $\alpha$ we can show that $b_\alpha = 0$ or it is a sum of sequences that have at least one integer that occurs at least three times. From this, it follows that $ph = 0$ on filtration level zero, since $\Omega C$ vanishes on such sequences applied to singletons. We now observe from the formulas for the action of $\mathcal S_2$ on singletons in $\Omega C$ that, for the sequences appearing in $p(x)$, every integer occurs at most twice and if $j$ occurs twice then the subsequence between the two occurrences of $j$ is a singleton or of the form $(k,\cdots,k)$, for some $k$. We can again prove by induction on degree that $h(x) = 0$ if $\alpha$ is of this form, and thus $hp = 0$ on filtration level zero.

We can now see by induction on the degree of $\alpha$ that for $z = b_\alpha([c_1], \ldots, [c_k])$, we have $h(z) = 0$, and thus $h^2 = 0$ on filtration level zero. Note that for every sequence $\beta$ in $b_\alpha$, $j_{\beta} = j_\alpha$, we see from above that $p(z) = 0$, and
\begin{align*}h(\partial z) &= h(o_\alpha([c_1], \ldots, [c_k])) \\
&= h(x - p(x) - h\partial x)\\
&= h(x)\\
&= z,
\end{align*}
by induction and the fact that $hp = 0$. Thus, $z - p(z) - h\partial z = 0$, and it follows that $h(z) = 0$, and hence $h^2 = 0$.

 Since the map $p$ is an algebra map, and since we define $h$ from the left, we can show by induction on the degree of $\alpha$ that $h$ is a derivation homotopy from $1$ to $p$ on filtration level zero, that is $h(xy) = h(x)p(y) \pm x h(y)$. We assume that $\alpha$ can be subdivided into two sequences $\alpha_1$, $\alpha_2$ such that $(1,2)(\alpha_1, \alpha_2) = \alpha$, we assume that $x=\alpha_1([c_1], \ldots, [c_k])$ and $y=\alpha_2([c_1], \ldots, [c_k])$, and thus $xy = \alpha([c_1], \ldots, [c_k])$. If $\alpha_1$ contains no repeats, then $h(x) = 0$ and the formula is clear. If $\alpha_1$ contains repeats then $j_\alpha = j_{\alpha_1}$, $S_\alpha = S_{\alpha_1}$, and $t_{(j_\alpha, S_\alpha)}(d\alpha_1) = 0$, thus $h_{(j_\alpha, S_\alpha)}(d\alpha_1) = \alpha_1$. Now, by induction we have the following,
 \[xy - p(xy) - h(\partial(xy)) =\]
  \[ xy - p(x)p(y) - h(\partial (x))p(y) \pm \partial(x) h(y)
 \pm h(x) p(\partial(y)) \pm x p(\partial(y)).
 \]
The terms $- p(x)p(y) - h(\partial (x))p(y)$ give $h(x)p(y)$, the term $\pm \partial(x)h(y)$ gives $\pm xh(y)$, and the rest of the terms will vanish after applying $h_{(j_\alpha, S_\alpha)}$ to the corresponding operation.

Finally, we proceed with the induction by filtration. Assume we have defined the derivation homotopy $h$ satisfying (\ref{f1}) and (\ref{f2}) for all elements in filtration $n$, and let $x = \alpha([c_1], \ldots, [c_k])$ be in filtration level $n+1$. Observe that $\partial x$ is in filtration level $n$, so $h\partial x $ is defined, and let $y = x - p(x) - h\partial x$. It follows from naturality that $y - o_\alpha([c_1], \ldots, [c_k])$ is in filtration level $n$. Thus, we define \[hx = h(y - o_\alpha([c_1], \ldots, [c_k])) + b_\alpha([c_1], \ldots, [c_k]).\] Let $c = ([c_1], \ldots, [c_k])$. By induction, and the fact that $d(b_\alpha) = o_\alpha$ we have
\begin{align*} \partial hx &= \partial h(y - o_\alpha(c)) + \partial b_\alpha (c)\\
&= - h(\partial y \pm o_\alpha(\partial c)) +  y - o_\alpha(c) - p(y) - p(o_\alpha(c)) + o_\alpha(c) \pm b_\alpha(\partial c)\\
&= - h(\pm o_\alpha(\partial c)) +  y  \pm b_\alpha(\partial c)\\
&= x - p(x) - h(\partial x) \pm b_\alpha(\partial c) - h(\pm o_\alpha(\partial c)).
\end{align*}
Now, a consistency check shows that $h(o_\alpha(\partial c)) = b_\alpha(\partial c)$. Thus, we have \[\partial h x + h\partial x = x - p(x).\]
Finally, induction and similar arguments to those given above show that $h$ is a derivation homotopy satisfying (\ref{f1}) and (\ref{f2}).
 \end{proof}

\begin{ex}Let's trace through the first non-trivial example to see how the homotopy works; we work with $R = \Z /2$ to avoid the signs. Consider $x = (1,2,3,1)([c_1], [c_2], [c_3])$, where $c_1, c_2, c_3$ are all primitive, and we let $\alpha = (1,2,3,1)$. Then, we have $h\partial x = 0$, but \[p(x) = [c_1 c_2] [c_3] + [c_2][c_1 c_3] = ((1,2,1,3)+ (2,1,3,1))([c_1], [c_2], [c_3]),\] and thus $x-p(x) = ((1,2,3,1) + (1,2,1,3) + (2,1,3,1))([c_1], [c_2], [c_3])$, and so $o_\alpha = (1,2,3,1) + (1,2,1,3) + (2,1,3,1)$, $j_\alpha = 1$ and $S_\alpha = \emptyset$, and therefore $b_\alpha = (1,2,1,3,1)$. So, finally,
\[hx = (1,2,1,3,1)([c_1], [c_2], [c_3]) = [c_1]\{[c_2], [c_3]\}.\]
\end{ex}

\section{$\Omega (UL)^\vee$ as an $\mathcal S_2$ algebra}\label{chevalley}
In this section, we assume that $1/2 \in R$.

The purpose of this section is to prove that, up to homotopy, $\Omega (UL)^\vee$ is commutative as an $\mathcal S_2$ algebra. The purpose for this is first, to give a nice application of the machinery introduced above, and second, to study $S^*(X)$ as an $\mathcal S_2$ algebra, where $X$ is finite-type. In a forthcoming paper, we prove that under certain assumptions on $X$ that $BS^*(X)$ is equivalent to $UL_X$ as a Hopf algebra. It will then follow from Theorem \ref{barthm}, and this section, that under these assumptions $S^*(X)$ is equivalent as an $\mathcal S_2$ algebra to $C^*(L_X)$ (the Chevalley-Elilenberg cochain complex on the Lie algebra $L_X$), which is a commutative algebra.

Let $L$ be a connected, finite-type, nonnegatively graded differential graded Lie algebra. Then, $(UL)^\vee$ is a Hopf algebra, and thus $\Omega (UL)^\vee$ is an $\mathcal S_2$ algebra by our earlier result. We want to show that this algebra is equivalent as an $\mathcal S_2$ algebra to a commutative algebra. Consider $BUL$, and define $C_*(L) \subset BUL$ to be the largest cocommutative subcoalgebra that contains $\Sigma L$ as its primitives; this is the well-known Chevalley--Eilenberg cochain complex.
It is a classical result of differential homological algebra, see \cite{mooredha} (Section 3) for example, that the inclusion $C_*(L) \to BUL$ is a weak equivalence of coalgebras. Define $C^*(L) = (C_*(L))^\vee$, then $C^*(L)$ is a commutative algebra, and the dual of $C_*(L) \to BUL$ is a weak equivalence of algebras $\Omega (UL)^\vee \to C^*(L)$. Thus, we have a twisting morphism $\alpha: (UL)^\vee \to C^*(L)$ which is the dual of the twisting morphism $C_*(L) \to UL$.

\begin{thm}\label{chev}The twisting morphism $\alpha: (UL)^\vee \to C^*(L)$ is a Hopf twisting morphism.
\end{thm}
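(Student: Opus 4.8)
The plan is to verify the Hopf twisting morphism formula (Definition \ref{hopftwist}) for $\alpha\colon (UL)^\vee \to C^*(L)$ by passing to the dual picture, where the combinatorics are governed by the cocommutative coalgebra $C_*(L)$ and the cofree nature of its inclusion into $BUL$. Since $L$ is of finite type, the statement that $\alpha$ is Hopf is dual to a statement about the twisting morphism $C_*(L) \to UL$, namely that the $\mathcal S_2$ Hopf coalgebra structure on $\Omega (UL)^\vee \cong (C_*(L))^\vee$ matches the commutative structure of $C^*(L)$. Concretely, I would first reduce, via Corollary \ref{hopftwistdef}, to checking the Hopf formula inductively on degree; because $C^*(L)$ is strictly commutative, all the higher braces $f(c_1)\{f(\ol c_2^{(1)}),\ldots,f(\ol c_2^{(k)})\}$ with $k \ge 2$ vanish identically in $C^*(L)$, so the Hopf formula collapses to the single relation
\[
\alpha(c_1 c_2) = (-1)^{|c_1|-1}\,\alpha(c_1)\{\alpha(\ol c_2^{(1)})\} = (-1)^{|c_1|-1}\,\alpha(c_1)\cup_1 \alpha(\ol c_2).
\]
Thus the whole theorem reduces to identifying the single brace $\cup_1$ on $C^*(L)$ with the effect of the product on $(UL)^\vee$ under $\alpha$.

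Next I would exploit the explicit description of $\alpha$ as the dual of the classical Chevalley--Eilenberg twisting morphism $C_*(L) \to UL$, which on generators sends $\Sigma L$ to $L \subset UL$ and is determined by the Lie bracket together with the shuffle/antisymmetrization that produces $C_*(L)$ as the largest cocommutative subcoalgebra of $BUL$ with primitives $\Sigma L$. On the primitive generators $\Sigma L$, the map $\alpha$ is essentially the identification of $L^\vee$ with the indecomposables of $C^*(L)$, and here the Hopf formula is immediate because primitives carry trivial reduced diagonal. The content is therefore concentrated on decomposable elements of $(UL)^\vee$: one must show that the image under $\alpha$ of a product, computed via the multiplication on $(UL)^\vee$ dual to the comultiplication of $UL$, agrees with the brace $\cup_1$ coming from the $\mathcal S_2$ structure on the cobar construction. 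I expect this to follow by comparing the formula for the first brace $[x]\{[y]\} = (-1)^{|x|-1}[x^{(1)}y]$ established in Section \ref{COBAR} against the antisymmetrized coproduct structure of $C_*(L)$, using that in $UL$ the relevant coproduct terms are governed by deconcatenation and the primitivity of $L$.

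The main obstacle will be the sign bookkeeping together with verifying that the brace $\cup_1$ on $C^*(L)$ genuinely vanishes at the chain level (not merely up to homotopy), which is what makes $C^*(L)$ a \emph{strictly} commutative target and hence forces all higher braces to zero. In particular I must confirm that under the weak equivalence $\Omega (UL)^\vee \to C^*(L)$ the $\mathcal S_2$ operations $x\{y\}$ descend correctly: since $C^*(L)$ is commutative the induced brace must land in the image and equal the product-difference $xy - (-1)^{|x||y|}yx$ up to the differential, but strict commutativity makes this difference zero, so I need the precise Koszul signs in Identity \ref{DIFF} and the definition of $\alpha$ to conspire so that $d\Gamma + \alpha(d(c_1c_2)) = m\circ(\alpha\otimes\alpha)\circ\ol\Delta(c_1c_2)$ holds on the nose. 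Given the reduction above, this is exactly an application of Theorem \ref{techtwist} with $C' = (UL)^\vee$, so the remaining work is the degreewise verification that $\alpha$ satisfies the collapsed Hopf formula on the generating decomposables $\ol c_1^{(1)}\ol c_2^{(1)}$, etc., which I would carry out by induction on word length in $UL$.
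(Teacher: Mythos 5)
Your proposal contains the right raw ingredients (the factorization of the dual twisting morphism through $L \subset UL$, primitivity of $L$, commutativity of $C^*(L)$), but it is organized around a misconception that derails the argument. The braces appearing in the Hopf twisting formula of Definition \ref{hopftwist} are the $\mathcal S_2$ operations of the \emph{target} $C^*(L)$, and $C^*(L)$ carries the trivial $\mathcal S_2$ structure of a commutative algebra: \emph{every} brace, including the first brace $\cup_1$, is zero by definition (see the Example in Section \ref{barhopf}). There is therefore nothing to ``identify'' and nothing to check ``at the chain level'': the right-hand side of the Hopf formula vanishes identically for every $k \ge 1$, and the theorem reduces to the single statement $\alpha(c_1 c_2) = 0$ for all $c_1, c_2 \in \ol{(UL)^\vee}$. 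Your plan instead treats $\cup_1$ on $C^*(L)$ as an operation to be computed by comparison with the cobar formula for $[x]\{[y]\}$, or as ``descending'' along the weak equivalence $\Omega (UL)^\vee \to C^*(L)$; that is a category error (the Hopf formula never refers to the braces of $\Omega (UL)^\vee$, and braces do not transfer along algebra maps), and it conflates the theorem with the strictly stronger statement---explicitly deferred by the paper to future work---that $\Omega (UL)^\vee \to C^*(L)$ is a map of $\mathcal S_2$ algebras.

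Once the reduction to $\alpha(c_1 c_2) = 0$ is made, the proof is a two-line computation that your proposal never actually delivers: by construction $\alpha$ is dual to the composite $C_*(L) \to L \to UL$, so for $f, g \in \ol{(UL)^\vee}$ the element $\alpha(fg)$ is obtained by restricting to $L$ the functional $fg = (f\otimes g)\circ \ol\Delta$ on $\ol{UL}$; since every element of $L$ is primitive in $UL$, this restriction is zero. Your appeals to Corollary \ref{hopftwistdef}, to Theorem \ref{techtwist} with $C' = (UL)^\vee$, and to an induction on word length in $UL$ are unnecessary and, moreover, misapplied: Theorem \ref{techtwist} is a device for \emph{constructing} extensions of twisting morphisms by imposing the Hopf formula, not for verifying that an already-given twisting morphism satisfies it, so invoking it with $C' = (UL)^\vee$ presupposes exactly what you are trying to prove.
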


Note that from this and Theorem \ref{barthm} we have the following corollary.

\begin{cor}$\Omega (UL)^\vee$ is equivalent as an $\mathcal S_2$ algebra to $C^*(L)$.
\end{cor}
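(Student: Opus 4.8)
The plan is to show that, because the target $C^*(L)$ is commutative, the Hopf twisting morphism condition of Definition \ref{hopftwist} collapses to a single, classical requirement: that the Chevalley--Eilenberg twisting morphism dual to $\alpha$ carry $C_*(L)$ into the primitives of $UL$.

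First I would use that $C^*(L) = (C_*(L))^\vee$ is a commutative algebra and so carries the $\mathcal S_2$ structure with all braces zero; in particular $x\{y_1, \ldots, y_k\} = 0$ whenever $k \ge 1$. Inspecting the defining formula of Definition \ref{hopftwist} with $f = \alpha$, every summand on the right-hand side is a brace $\alpha(c_1)\{\alpha(\ol c_2^{(1)}), \ldots, \alpha(\ol c_2^{(k)})\}$ with $k \ge 1$ arguments, so the right-hand side vanishes identically. Hence $\alpha$ is a Hopf twisting morphism if and only if $\alpha(c_1 c_2) = 0$ for all $c_1, c_2 \in \ol{(UL)^\vee}$, that is, if and only if $\alpha$ annihilates the decomposables $\ol{(UL)^\vee} \cdot \ol{(UL)^\vee}$.

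Next I would translate this condition through finite-type duality. Let $\tau \colon C_*(L) \to UL$ denote the twisting morphism dual to $\alpha$, namely the one underlying the inclusion $C_*(L) \to BUL$. Since the product on $(UL)^\vee$ is dual to the coproduct on $UL$, evaluating $\alpha(c_1 c_2)$ on an element $\xi \in C_*(L)$ gives, up to sign, $\langle c_1 \otimes c_2, \, \Delta_{UL}(\tau(\xi)) \rangle$. As $c_1$ and $c_2$ lie in the augmentation ideal, the components of $\Delta_{UL}(\tau(\xi))$ containing a unit factor pair to zero, so only the reduced coproduct $\ol\Delta_{UL}(\tau(\xi))$ contributes. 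Therefore $\alpha$ annihilates decomposables precisely when $\ol\Delta_{UL} \circ \tau = 0$, i.e. precisely when the image of $\tau$ is contained in the primitives of $UL$.

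It then remains to record that $\tau$ does land in the primitives. This is standard in differential homological algebra: the twisting morphism underlying the inclusion $C_*(L) \hookrightarrow BUL$ is the composite that projects $C_*(L)$ onto its length-one part $\Sigma L$, desuspends, and includes $L \hookrightarrow UL$, so its image is exactly $L = \mathrm{Prim}(UL)$ (see \cite{mooredha}). Together with the preceding step this proves that $\alpha$ is a Hopf twisting morphism. I expect the only real difficulty to be bookkeeping rather than conceptual: verifying that the duality computation genuinely produces the reduced coproduct (so that the augmentation terms drop out) with the correct Koszul signs, and correctly identifying $\tau$ with the desuspended projection onto generators. These checks are routine but require care.
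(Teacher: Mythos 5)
Your argument correctly establishes that $\alpha$ is a Hopf twisting morphism, and it does so by essentially the same route as the paper's proof of Theorem \ref{chev}: commutativity of $C^*(L)$ makes every brace on the right-hand side of the formula in Definition \ref{hopftwist} vanish, so the condition reduces to $\alpha$ annihilating products of augmentation-ideal elements, and this follows by duality from the fact that the twisting morphism $\tau\colon C_*(L)\to UL$ factors through the primitive elements. (A minor point: you only need the image of $\tau$ to be \emph{contained} in the primitives; the identification $L = \mathrm{Prim}(UL)$ is neither needed nor true in full generality.)

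However, the statement to be proved is the corollary itself, namely the $\mathcal S_2$ equivalence between $\Omega(UL)^\vee$ and $C^*(L)$, and your proposal stops exactly where that deduction has to begin. The passage from ``$\alpha$ is a Hopf twisting morphism'' to an equivalence of $\mathcal S_2$ algebras is not automatic, and it is not the classical algebra map $\Omega(UL)^\vee \to C^*(L)$ that does the job: algebra maps out of $\Omega C$ induced by twisting morphisms need not respect braces (this is precisely the failure discussed at the end of Section \ref{COBAR}, and the paper explicitly defers the claim that this particular map is an $\mathcal S_2$ map to a future paper). What is missing is the machinery of Section \ref{tildecobar}: the Hopf twisting morphism $\alpha$ corresponds to a map of Hopf algebras $(UL)^\vee \to BC^*(L)$, whose adjoint under $\widetilde\Omega \dashv B$ is an $\mathcal S_2$ algebra map $g\colon \widetilde\Omega(UL)^\vee \to C^*(L)$; composing $g$ with the section $\iota\colon \Omega(UL)^\vee \to \widetilde\Omega(UL)^\vee$ recovers the classical weak equivalence dual to $C_*(L)\to BUL$ (both are algebra maps agreeing on generators), and since $\iota$ is a weak equivalence by Theorem \ref{barthm}, two-out-of-three shows that $g$ is a weak equivalence. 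The corollary then follows from the zig-zag of $\mathcal S_2$ algebra weak equivalences $\Omega(UL)^\vee \leftarrow \widetilde\Omega(UL)^\vee \to C^*(L)$, the left-hand map being the retraction of Theorem \ref{barthm}. Without this appeal to Theorem \ref{barthm} --- the paper's main theorem --- your argument proves Theorem \ref{chev} but not the stated equivalence.
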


\begin{proof}[Proof of Theorem \ref{chev}]Note that by construction the dual twisting morphism factors $C_*(L) \to L \to UL$.

Let $f, g: \ol{UL} \to R$ be linear maps, corresponding to elements of $\ol{UL^\vee}$. Then, we obtain the product $fg$ by dualizing the coproduct on $UL$, and thus we have that $fg$ is defined on $\ol{UL}$ by
\[
\xymatrix{\ol{UL} \ar[r]^{\ol \Delta}& \ol{UL}\otimes \ol{UL} \ar[r]^{f\otimes g}&  R\otimes R \cong R,}
\]
and $fg$ maps to the element in $C^*(L)$ under $\alpha$ given by restricting the above to $L$. Since $L$ consists of primitive elements, $fg$ maps to zero under the twisting morphism $(UL)^\vee \to C^*(L)$.
\end{proof}

In fact, this calculation can be strengthened to show that $\Omega (UL)^\vee \to C^*(L)$ is actually a map of $\mathcal S_2$ algebras; we will return to this in a future paper.

\bibliographystyle{amsplain}

\begin{thebibliography}{10}

\bibitem{jAdams}
J.~F. Adams, \emph{On the non-existence of elements of {H}opf invariant one},
  Ann. of Math. (2) \textbf{72} (1960), 20--104. \MR{0141119 (25 \#4530)}

\bibitem{anick}
David~J. Anick, \emph{Hopf algebras up to homotopy}, J. Amer. Math. Soc.
  \textbf{2} (1989), no.~3, 417--453. \MR{991015 (90c:16007)}

\bibitem{bergfress}
Clemens Berger and Benoit Fresse, \emph{Combinatorial operad actions on
  cochains}, Math. Proc. Cambridge Philos. Soc. \textbf{137} (2004), no.~1,
  135--174. \MR{2075046 (2005e:18013)}

\bibitem{JeromeW}
Wojciech Chach{\'o}lski and J{\'e}r{\^o}me Scherer, \emph{Homotopy theory of
  diagrams}, Mem. Amer. Math. Soc. \textbf{155} (2002), no.~736, x+90.
  \MR{1879153 (2002k:55026)}

\bibitem{fresse}
Benoit Fresse, \emph{La construction bar d'une alg\`ebre comme alg\`ebre de
  {H}opf {$E$}-infini}, C. R. Math. Acad. Sci. Paris \textbf{337} (2003),
  no.~6, 403--408. \MR{2015084 (2004h:18008)}

\bibitem{fressehopfbar}
\bysame, \emph{{The universal Hopf operads of the bar construction}}, ArXiv
  Mathematics e-prints (2007).

\bibitem{fressebook}
\bysame, \emph{Modules over operads and functors}, Lecture Notes in
  Mathematics, vol. 1967, Springer-Verlag, Berlin, 2009. \MR{2494775
  (2010e:18009)}

\bibitem{GV}
Murray Gerstenhaber and Alexander~A. Voronov, \emph{Homotopy {$G$}-algebras and
  moduli space operad}, Internat. Math. Res. Notices (1995), no.~3, 141--153
  (electronic). \MR{1321701 (96c:18004)}

\bibitem{hess}
Kathryn Hess, Paul-Eug{\`e}ne Parent, Jonathan Scott, and Andrew Tonks, \emph{A
  canonical enriched {A}dams-{H}ilton model for simplicial sets}, Adv. Math.
  \textbf{207} (2006), no.~2, 847--875. \MR{2271989 (2007k:55011)}

\bibitem{HMS}
Dale Husemoller, John~C. Moore, and James Stasheff, \emph{Differential
  homological algebra and homogeneous spaces}, J. Pure Appl. Algebra \textbf{5}
  (1974), 113--185. \MR{0365571 (51 \#1823)}

\bibitem{tKad}
T.~Kadeishvili, \emph{On the cobar construction of a bialgebra}, Homology
  Homotopy Appl. \textbf{7} (2005), no.~2, 109--122. \MR{2156310 (2006b:57048)}

\bibitem{mandellenfpadic}
Michael~A. Mandell, \emph{{$E_\infty$} algebras and {$p$}-adic homotopy
  theory}, Topology \textbf{40} (2001), no.~1, 43--94. \MR{1791268
  (2001m:55025)}

\bibitem{mandell}
\bysame, \emph{Cochains and homotopy type}, Publ. Math. Inst. Hautes \'Etudes
  Sci. (2006), no.~103, 213--246. \MR{2233853 (2007d:55009)}

\bibitem{maygils}
J.~P. May, \emph{The geometry of iterated loop spaces}, Springer-Verlag,
  Berlin, 1972, Lectures Notes in Mathematics, Vol. 271. \MR{0420610 (54
  \#8623b)}

\bibitem{McSmith2}
James~E. McClure and Jeffrey~H. Smith, \emph{Multivariable cochain operations
  and little {$n$}-cubes}, J. Amer. Math. Soc. \textbf{16} (2003), no.~3,
  681--704 (electronic). \MR{1969208 (2004c:55021)}

\bibitem{mooredha}
John~C. Moore, \emph{Differential homological algebra}, Actes du {C}ongr\`es
  {I}nternational des {M}ath\'ematiciens ({N}ice, 1970), {T}ome 1,
  Gauthier-Villars, Paris, 1971, pp.~335--339. \MR{0436178 (55 \#9128)}

\end{thebibliography}
\providecommand{\bysame}{\leavevmode\hbox to3em{\hrulefill}\thinspace}
\providecommand{\MR}{\relax\ifhmode\unskip\space\fi MR }
\providecommand{\MRhref}[2]{%
  \href{http://www.ams.org/mathscinet-getitem?mr=#1}{#2}
}
\providecommand{\href}[2]{#2}

\end{document}